 \definecolor{myaqua}{rgb}{0.0,0.5,0.55}
 \definecolor{lightaqua}{rgb}{0.75,0.95,0.95}
\newtheorem{theorem}{Theorem}
\newtheorem{lemma}{Lemma}
\newtheorem{definition}{Definition}
\newtheorem{example}{Example}
\newtheorem{algorithm}{Algorithm}
\def\bt{\begin{tabular}}
\def\et{\end{tabular}}
\def\and{\mbox{ and }}
\def\1{{\bf 1}}
 \def\sectionn#1{\refstepcounter{section}{%\color{myaqua}

 \vskip 6mm

 \noindent\Large\bf\thesection. #1}

 \vskip 3mm}
\begin{document}
\begin{frontmatter}

\title{Collatz Theorem}
\author{Dagnachew Jenber Negash}
\address{Addis Ababa Science and Technology University\\Addis Ababa, Ethiopia\\Email: djdm$\_$101979@yahoo.com}
 %\fancyhead[L]{\hspace*{-13mm}
 %\bt{l}{\bf Open Journal of *****, 2014, *,**}\\
 %Published Online **** 2014 in SciRes.
 %\href{http://www.scirp.org/journal/*****}{\color{blue}%{\underline{\smash{http://www.scirp.org/journal/****}}}} \\
% \href{http://dx.doi.org/10.4236/****.2014.*****}{\color{blue}%{\underline{\smash{http://dx.doi.org/10.4236/****.2014.*****}}}} \\
 %\et}
 %\fancyhead[R]{\special{psfile=pic1.ps hoffset=366 voffset=-33}}

 %$\mbox{ }$

 %\vskip 12mm

%{  \fontfamily{roman}\selectfont

% "Title of the Paper"
%{\noindent{\huge\bf \begin{center}
 % Collatz Theorem\end{center}}}
%\vspace{3mm}

%{ \begin{center}Dagnachew Jenber\end{center}}}
%\vspace{4mm}
%\centering
 %College of Natural and Social Sciences\\ Mathematics Division\\ Addis Ababa Science and Technology University\\ Addis Ababa, 
 %Ethiopia\\
%Email:
%\href{mailto:djdm\_ 101979@yahoo.com}{\color{blue}{{\smash{\textit{djdm$\_$101979@yahoo.com}}}}}
%\footnote{Dagnachew Jenber\\
% College of Natural and Social Sciences\\ Mathematics Division\\ Addis Ababa Science and Technology University\\Addis Ababa, 
 %Ethiopia\\
%Email:
%\href{mailto:djdm\_ 101979@yahoo.com}{\color{blue}{{\smash{\textit{djdm$\_$101979@yahoo.com}}}}}\\P.O.Box: 16417}\\
%P.O.Box: 16417

 %\\[4mm]
%Received **** 2014
 %\\[4mm]
%Copyright \copyright \ 2014 by author(s) and Scientific Research Publishing Inc. \\
%This work is licensed under the Creative Commons Attribution International License (CC BY). \\
%\href{http://creativecommons.org/licenses/by/4.0/}{\color{blue}%{\underline{\smash{http://creativecommons.org/licenses/by/4.0/}}}}\\
% \special{psfile=pic2.ps hscale=110 vscale=110 hoffset=-8 voffset=-24}

%\lin{5}{7}

 %{  \fontfamily{Cambria}\selectfont
\begin{abstract}
\footnotesize{This paper studies the proof of Collatz conjecture for some set of sequence of odd numbers with infinite number of elements. These set generalized to the set which contains all positive odd integers. This extension assumed to be the proof of the full conjecture, using the concept of mathematical induction. In section 9, the Collatz conjecture is proved again using mathematical induction. Therefore Collatz conjecture is proved two times. Finally two algorithms with GNU Octave code are given to check the validity of the proof for some particular numbers, code of Algorithm 1 is the restatement of Collatz conjecture function which is used to check the validity of our newly generated Collatz function for some particular terms which is provided by Algorithm 2. Therefore go from Algorithm 2 to Algorithm 1 for checking the sequence of numbers and the number of steps needed to reach to 1 according to the conjecture.} 
\end{abstract}
\begin{keyword}
Collatz conjecture; Recurrence relations; Concept of mathematical induction.\\
MSC2010: 65Q30\sep 97N30\sep 39A12\sep 11B75
\end{keyword}
\end{frontmatter}
%{\noindent{\bf Mathematics Subject Classification 2010}: 11A25, 11A41, 11A51\hspace{31.5mm}

% \fancyfoot[L]{{\noindent{\color{myaqua}{\bf How to cite this
 %paper:}} X. Chen and M. Baron (2014)
 %Change-Point Analysis of Survival Data with Application in Clinical Trials.
 %***********,*,***-***}}

%\lin{3}{1}
%\newpage
%\justify
\sectionn{Introduction}
\label{S:1}
\justify
The Collatz conjecture is an unsolved conjecture in
mathematics. It is named after Lothar Collatz, who first
proposed it in 1937. The conjecture is also known as the $3n +
1$ conjecture, the Ulam conjecture (after Stanislaw Ulam), the
Syracuse problem, as the hailstone sequence or hailstone
numbers, or as Wondrous numbers per Godel, Escher, Bach.
It asks whether a certain kind of number sequence always
ends in the same way, regardless of the starting number.
The problem is related to a wide range
of topics in mathematics, including number theory, computability theory, and the analysis of dynamic systems.
Paul Erdos said about the Collatz conjecture, "Mathematics is
not yet ready for such problems." He offered \$500 for its
solution. (Lagarias 1985)
\sectionn{Statement of the problem}
\label{sec:2}
{ \fontfamily{times}\selectfont
 \noindent
\justify 
Consider the following operation on an arbitrary positive integer\cite{Guye}\cite{Collatz}:
\begin{itemize}
\item If the number is even, divide it by two.
\item If the number is odd, triple it and add one.
\end{itemize}
\begin{equation*}
f(n)=\left\{\begin{array}{ll}
n/2 & \text{ if }n \text{ is even,}\\
3n+1& \text{ if }n \text{ is odd.}
\end{array}\right.
\end{equation*}
Form a sequence by performing this operation repeatedly, beginning with
any positive integer.
\begin{itemize}

\item Example: n = 6 produces the sequence
\end{itemize}
\begin{equation*}
6\text{, }3\text{, } 10\text{, } 5\text{, } 16\text{, } 8\text{, } 4\text{, } 2\text{, } 1\text{, } 4\text{, } 2\text{, } 1\text{, }\cdots
\end{equation*}
\begin{itemize}
\item The Collatz conjecture is:\\
This process will eventually reach the number 1, regardless of which
positive integer is chosen initially.
\end{itemize}

\sectionn{Some Examples}
\label{sec:3}

{ \fontfamily{times}\selectfont
 \noindent
\begin{itemize}
\item n = 11 produces the sequence
\end{itemize}
\begin{equation*}
11, 34, 17, 52, 26, 13, 40, 20, 10, 5, 16, 8, 4, 2, 1.
\end{equation*}
which means we operated Collatz function $14$ times to reach to the number $1$.
\begin{itemize}
\item n = 27 produces the sequence
\end{itemize}
\begin{equation*}
27,82,41,124,62,31,94,47,142,71,214,107,322,161,484,242,121,
\end{equation*}
\begin{equation*}
364,182,91,274,137,412,206,103,310,155,466,233,700,350,175,
\end{equation*}
\begin{equation*}
526,263,790,395,1186,593,1780,890,445,1336,668,334,167,502,
\end{equation*}
\begin{equation*}
251,754,377,1132,566,283,850,425,1276,638,319,958,479,1438,
\end{equation*}
\begin{equation*}
719,2158,1079,3238,1619,4858,2429,7288,3644,1822,911,2734,
\end{equation*}
\begin{equation*}
1367,4102,2051,6154,3077,9232,4616,2308,1154,577,1732,866,
\end{equation*}
\begin{equation*}
433,1300,650,325,976,488,244,122,61,184,92,46,23,70,35,106,
\end{equation*}
\begin{equation*}
53,160,80,40,20,10,5,16,8,4,2,1.\hspace{45mm}
\end{equation*}that is, we operated or used Collatz function repeatdely $111$ times to reach to the number $1$.
 %\pagestyle{fancy}
 %\fancyfoot{}
 %\fancyhead{} % clear all header and footer fields
 %\fancyhf{}
% \fancyhead[RO]{\leavevmode \put(-90,0){\color{myaqua}D. Jenber} \boxx{15}{-10}{10}{50}{15} }
 %\fancyhead[LE]{\leavevmode \put(0,0){\color{myaqua}D. Jenber}  \boxx{-45}{-10}{10}{50}{15} }
 %\fancyfoot[C]{\leavevmode
 %\put(0,0){\color{lightaqua}\circle*{34}}
 %\put(0,0){\color{myaqua}\circle{34}}
 %\put(-2.5,-3){\color{myaqua}\thepage}}
%\renewcommand{\headrule}{\hbox to\headwidth{\color{myaqua}\leaders\hrule height %\headrulewidth\hfill}}
%\renewcommand{\headrulewidth}{0.5pt}
%\renewcommand{\footrulewidth}{0pt}

 %\pagestyle{fancy}
 %\fancyfoot{}
 %\fancyhead{} % clear all header and footer fields
 %\fancyhf{}
 %\fancyhead[RO]{\leavevmode \put(-90,0){\color{myaqua}D. Jenber} \boxx{15}{-10}{10}{50}{15} }
 %\fancyhead[LE]{\leavevmode \put(0,0){\color{myaqua}D. Jenber}  \boxx{-45}{-10}{10}%{50}{15} }
 %\fancyfoot[C]{\leavevmode
 %\put(0,0){\color{lightaqua}\circle*{34}}
 %\put(0,0){\color{myaqua}\circle{34}}
 %\put(-2.5,-3){\color{myaqua}\thepage}}

% \renewcommand{\headrule}{\hbox to\headwidth{\color{myaqua}\leaders\hrule height %\headrulewidth\hfill}}

 \newpage
\sectionn{Supporting arguments for the conjecture}
\label{sec:4}

{ \fontfamily{times}\selectfont
 \noindent
 
\begin{itemize}
\item Experimental evidence\cite{Garner}\cite{Lagarias}:
\end{itemize}
\justify
The conjecture has been checked by computer for all starting values
up to $19\times 2^{58} \approx 5.48\times10^{18}$
\begin{itemize}
\item A probabilistic argument:
\end{itemize}
\justify
One can show that each odd number in a sequence is on average
$3/4$ of the previous one, so every sequence should decrease in the
long run\cite{Lagariass}.\justify
This is not a proof because Collatz sequences are not produced by
random events.
\sectionn{One Definition}
\label{sec:5}
Through out this paper we will use the definition
\begin{definition}
Define $N_k$ as the number of collatz function operation for the number $k$ to get $1$.
\end{definition}
\sectionn{Proof of collatz analysis for some sequence of numbers }
\label{sec:6}

{ \fontfamily{times}\selectfont
 \noindent} 
 \justify
 We know that Collatz analysis is true for all numbers in the set $\{a_n: a_n=2^{2n}\text{ or }a_n=2^{2n-1},n\in \mathbb{N} \}$. Here I want to show that Collatz analysis is true for all numbers in the set $\{a_n: a_n=(2k+1)2^{2n}\text{ or }a_n=(2k+1)2^{2n-1},n,k\in \mathbb{N} \}$, here we have to be sure that Collatz conjecture becomes collatz Theorem if it is proved for this set. Before the conclusion let's see some sort of sequence of numbers and Collatz analysis is definitely true for these set of sequence of numbers. Let's get started from the following figure and then we will discuss, give some analysis based on our objective on the figure and form one sequence function which contains all numbers in the sequence that shows Collatz analysis is true. The following theorem shows the truth of collatz conjecture on some recurrence relations. The recurrence relations are inter-related, which means one recurrence relation formed from the previous one. Therefore if we proved that one recurrence relation satisfies Collatz conjecture then so the next, and then so on. Hence from the concept of mathematical induction, we will prove Collatz conjecture for all odd natural numbers, then the proof for all even natural number is straight forward, because all even number have the form $(2k+1)2^{n}$. To do this let's get started from $\bf{Theorem \text{ }1.}$
\begin{theorem}
Collatz conjecture is true for all odd numbers in the set of the recurrence relation 
\begin{equation*}
 \{ a_n: a_n-a_{n-1}=2^{2n},a_0=1\text{ and }n\in \mathbb{N}\}
\end{equation*}

\begin{equation*}
\Rightarrow a_n=\frac{1}{3}\bigg[4^{n+1}-1\bigg],\text{ }n\geq 0
\end{equation*} and for each $a_{n}$, there are $2n+3$ number of steps (or Collatz function operation) needed to get $1$.   
\end{theorem}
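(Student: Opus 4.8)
The plan is to reduce the whole statement to a single clean observation, namely that $3a_n+1$ is a power of two. First I would solve the recurrence $a_n - a_{n-1} = 2^{2n}$ with $a_0 = 1$ by telescoping, writing $a_n = a_0 + \sum_{k=1}^n 4^k$. Summing the geometric series gives $a_n = 1 + \frac{4^{n+1}-4}{3} = \frac{1}{3}\bigl(4^{n+1}-1\bigr)$, which is exactly the stated closed form. Equivalently one has $a_n = \sum_{k=0}^n 4^k$, a representation I would keep on hand for the parity check in the next step.

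Next I would verify that each $a_n$ is odd, since this is what licenses using the odd branch $n \mapsto 3n+1$ of the Collatz map. From $a_n = \sum_{k=0}^n 4^k$, every term with $k \geq 1$ is even while the single term $4^0 = 1$ is odd, so the whole sum is odd for all $n \geq 0$.

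The heart of the argument is the identity $3a_n + 1 = (4^{n+1}-1) + 1 = 4^{n+1} = 2^{2n+2}$. Because $a_n$ is odd, one application of the Collatz function sends $a_n \mapsto 3a_n + 1 = 2^{2n+2}$. From a pure power of two the trajectory is completely forced: repeated halving yields $2^{2n+2} \to 2^{2n+1} \to \cdots \to 2 \to 1$, which is precisely $2n+2$ even-branch steps. Hence the sequence reaches $1$, establishing the conjecture for every $a_n$, and the total count is $1 + (2n+2) = 2n+3$ steps, matching the claim.

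I do not expect a genuinely hard step here; the work is elementary once one notices that $3a_n+1$ collapses to a power of two, so the trajectory never branches back into the odd case. The only points needing a little care are the boundary case $n = 0$, where $a_0 = 1$ and the three steps $1 \to 4 \to 2 \to 1$ must be counted under the same convention, and fixing precisely what "number of steps" means, i.e.\ that $N_{a_n}$ counts each Collatz application including the initial odd step. Once that counting convention is pinned down, comparing it against $2n+3$ is immediate.
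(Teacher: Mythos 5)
Your proposal is correct, and it is in fact more complete than the paper's own proof. The paper's argument for this theorem consists of a diagram of the trajectories of $1, 5, 21, 85, \ldots$, a verification of the identities $3(1)+1=2^2$, $3(5)+1=2^4$, $3(21)+1=2^6$, $3(85)+1=2^8$ for those first few terms, and the instruction that "you can check that this is true for the rest of numbers in the sequence by choosing randomly" --- i.e.\ the key fact that $3a_n+1$ is always a power of two is only exhibited on examples, never proved for general $n$. You supply exactly the missing step: from the closed form $a_n=\tfrac{1}{3}\bigl(4^{n+1}-1\bigr)$ you get $3a_n+1=4^{n+1}=2^{2n+2}$ identically, and together with the parity check $a_n=\sum_{k=0}^{n}4^k\equiv 1 \pmod 2$ this forces the entire trajectory (one odd step, then $2n+2$ halvings) and yields the count $2n+3$ for every $n\ge 0$, not just the ones in the figure. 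The underlying idea is the same as the paper's --- the trajectory collapses to a power of two after one odd step --- but your version is an actual proof by computation with the general term, whereas the paper's is an inductive pattern asserted from finitely many cases. The only thing worth adding is a one-line remark that the paper's own solution of the recurrence agrees with your telescoping sum, so the two closed forms coincide.
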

\begin{proof}   
\newpage
\tikzstyle{rect}=[draw,rectangle,fill=blue!20,text width=3em,text centered, minimum height=2em]
\tikzstyle{ellip}=[draw,ellipse,fill=white!20, minimum height=2em]
\tikzstyle{circ}=[draw, circle, fill=white!20, minimum width=2pt,inner sep=3pt]
\tikzstyle{diam}=[draw,diamond,fill=white!20,text width=8em, text badly centered, inner sep=0pt]
\tikzstyle{line}=[draw,-latex']
\begin{figure}[h!]
\begin{center}
\begin{tikzpicture}[node distance=1cm, auto]
\node [rect, rounded corners] (step1) {$1$};
\node [rect, rounded corners, above of=step1,node distance=2cm] (step2) {$1(2)$};
\node [rect, rounded corners, above of=step2,node distance=2cm] (step3) {$1(2^2)$};
\node [rect, rounded corners, above of=step3,node distance=2cm] (step4) {$1(2^3)$};
\node [rect, rounded corners, above of=step4,node distance=2cm] (step5) {$1(2^4)$};
\node [rect, rounded corners, above of=step5,node distance=2cm] (step6) {$1(2^5)$};
\node [rect, rounded corners, above of=step6,node distance=2cm] (step7) {$1(2^6)$};
\node [rect, rounded corners, above of=step7,node distance=2cm] (step8) {$1(2^7)$};
\node [rect, rounded corners, above of=step8,node distance=2cm] (step9) {$1(2^8)$};
\node [rect, rounded corners, above of=step9,node distance=2cm] (step10) {$1(2^9)$};
\node [rect, right of=step3,node distance=2cm,color=red] (step11) {$\color{white}1$};
\node [rect, right of=step11,node distance=2cm] (step12) {$1(2)$};
\node [rect, right of=step12,node distance=2cm] (step13) {$1(2^2)$};
\node [rect, right of=step13,node distance=2cm] (step14) {$1(2^3)$};
\node [rect, right of=step14,node distance=2cm] (step15) {$1(2^4)$};
\node [rect, right of=step15,node distance=2cm] (step16) {$1(2^5)$};
\node [rect, right of=step16,node distance=2cm] (step17) {$1(2^6)$};
\node [rect, right of=step17,node distance=2cm] (step18) {$1(2^7)$};
\node [rect, right of=step5,node distance=2cm,color=red] (step19) {$\color{white}5$};
\node [rect, right of=step19,node distance=2cm] (step20) {$5(2)$};
\node [rect, right of=step20,node distance=2cm] (step21) {$5(2^2)$};
\node [rect, right of=step21,node distance=2cm] (step22) {$5(2^3)$};
\node [rect, right of=step22,node distance=2cm] (step23) {$5(2^4)$};
\node [rect, right of=step23,node distance=2cm] (step24) {$5(2^5)$};
\node [rect, right of=step24,node distance=2cm] (step25) {$5(2^6)$};
\node [rect, right of=step25,node distance=2cm] (step26) {$5(2^7)$};

\node [rect, right of=step7,node distance=2cm,color=red] (step27) {$\color{white}21$};
\node [rect, right of=step27,node distance=2cm] (step28) {$21(2)$};
\node [rect, right of=step28,node distance=2cm] (step29) {$21(2^2)$};
\node [rect, right of=step29,node distance=2cm] (step30) {$21(2^3)$};
\node [rect, right of=step30,node distance=2cm] (step31) {$21(2^4)$};
\node [rect, right of=step31,node distance=2cm] (step32) {$21(2^5)$};
\node [rect, right of=step32,node distance=2cm] (step33) {$21(2^6)$};
\node [rect, right of=step33,node distance=2cm] (step34) {$21(2^7)$};

\node [rect, right of=step9,node distance=2cm,color=red] (step35) {\color{white} $85$};
\node [rect, right of=step35,node distance=2cm] (step36) {\tiny $85(2)$};
\node [rect, right of=step36,node distance=2cm] (step37) {\tiny $85(2^2)$};
\node [rect, right of=step37,node distance=2cm] (step38) {\tiny $85(2^3)$};
\node [rect, right of=step38,node distance=2cm] (step39) {\tiny $85(2^4)$};
\node [rect, right of=step39,node distance=2cm] (step40) {\tiny $85(2^5)$};
\node [rect, right of=step40,node distance=2cm] (step41) {\tiny $85(2^6)$};
\node [rect, right of=step41,node distance=2cm] (step42) {\tiny $85(2^7)$};

\path [line] (step1)--(step2);
\path [line] (step2)--(step3);
\path [line] (step3)--(step4);
\path [line] (step4)--(step5);
\path [line] (step5)--(step6);
\path [line] (step6)--(step7);
\path [line] (step7)--(step8);
\path [line] (step8)--(step9);
\path [line] (step9)--(step10);

\path [line] (step18)--(step17);
\path [line] (step17)--(step16);
\path [line] (step16)--(step15);
\path [line] (step15)--(step14);
\path [line] (step14)--(step13);
\path [line] (step13)--(step12);
\path [line] (step12)--(step11);
\path [line] (step11)--(step3);

\path [line] (step26)--(step25);
\path [line] (step25)--(step24);
\path [line] (step24)--(step23);
\path [line] (step23)--(step22);
\path [line] (step22)--(step21);
\path [line] (step21)--(step20);
\path [line] (step20)--(step19);
\path [line] (step19)--(step5);

\path [line] (step34)--(step33);
\path [line] (step33)--(step32);
\path [line] (step32)--(step31);
\path [line] (step31)--(step30);
\path [line] (step30)--(step29);
\path [line] (step29)--(step28);
\path [line] (step28)--(step27);
\path [line] (step27)--(step7);

\path [line] (step42)--(step41);
\path [line] (step41)--(step40);
\path [line] (step40)--(step39);
\path [line] (step39)--(step38);
\path [line] (step38)--(step37);
\path [line] (step37)--(step36);
\path [line] (step36)--(step35);
\path [line] (step35)--(step9);

\end{tikzpicture}
\caption{\small Proof of Collatz conjecture for the sequence of numbers $\{1,5,21,85,\cdots \}$}
\end{center}

\end{figure}
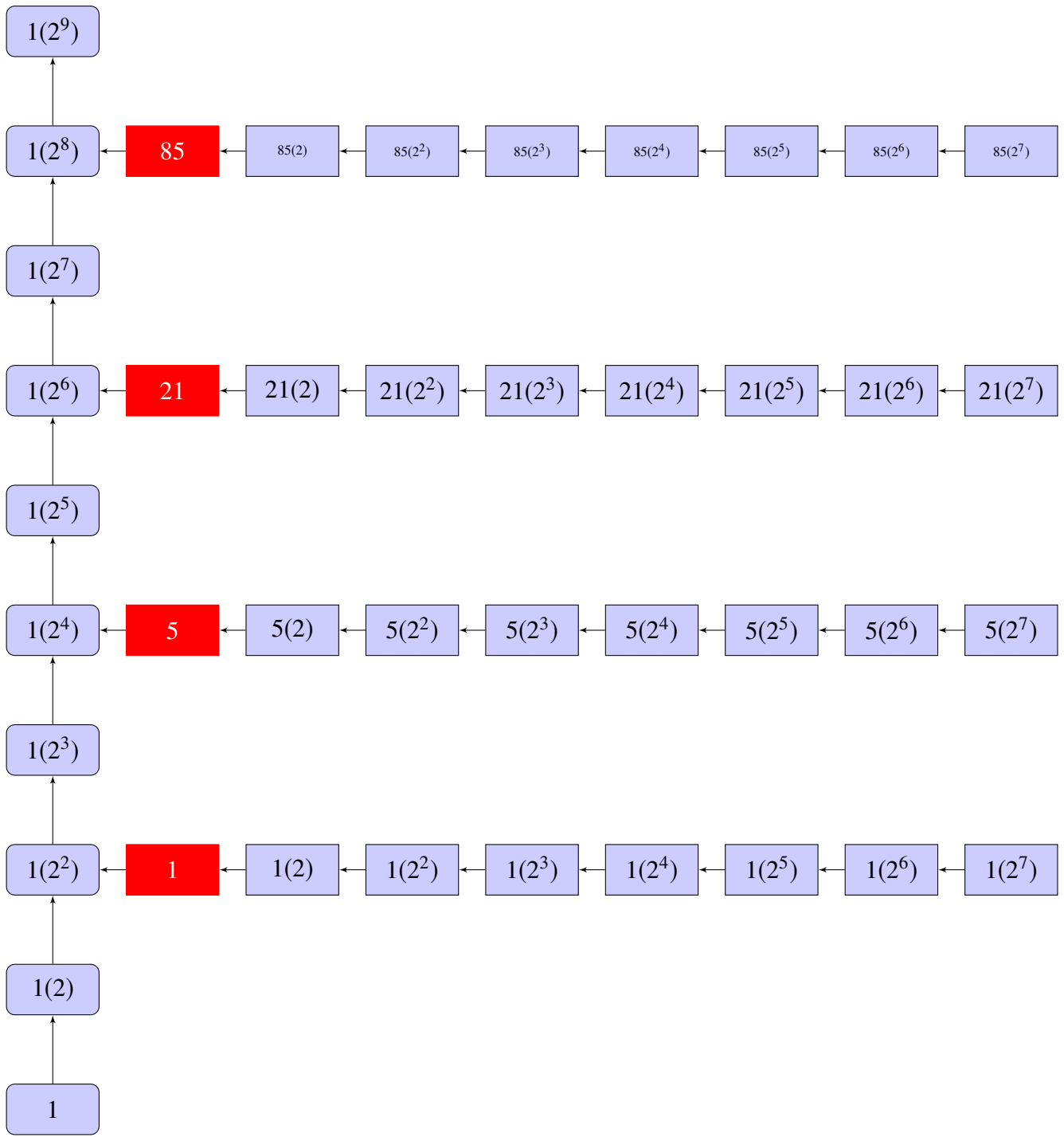
\justify
As you see from the figure we have sequence of numbers $\{1,5,21,85,\cdots \}$. All elements of this sequence are odd numbers with $a_0=1,a_1=5,a_2=21,a_3=85,\cdots$, and $3(1)+1=2^2,3(5)+1=2^{4},3(21)+1=2^6,3(85)+1=2^8,\cdots$. The question is how this sequence of numbers formed?. The answer is simple. You can observe that $5=1+2^2,21=5+2^4,85=21+2^6,\cdots$. Therefore the recurrence relation is $a_n=a_{n-1}+2^{2n}$ or $a_n-a_{n-1}=2^{2n}$ with $a_0=1$ and $n\in \mathbb{N}$.\\ Hence we can conclude that collatz analysis is true for all numbers in the set
\begin{equation*}
 \{ a_n: a_n-a_{n-1}=2^{2n},a_0=1\text{ and }n\in \mathbb{N}\}
\end{equation*}
If we solve this non-homogeneous recurrence relation we get,
\begin{equation*}
a_n=\frac{1}{3}\bigg[4^{n+1}-1\bigg],\text{ }n\geq 0
\end{equation*} 
This is the general term for the above sequence of numbers. We might ask "How many number of steps we need to reach to 1 for each number in the above sequence by applying Collatz function?". Here is the answer. For example for $n=0$, we have $a_0=1$, therefore according to Collatz conjecture $3(a_0)+1=2^2$ since $a_0$ is odd. As you can see from the above figure that we need to have $3$ steps, which means $2+1$ steps. Similarly for $a_1=5$ we need to have $5$ steps to reach to $1$ since $3(5)+1=2^{4}$, that is take the exponent of $2$ on the right hand side of $3(5)+1=2^4$ and add $1$, that is, $4+1$ steps. Thus for $a_2=21$ we need to have $7$ steps since $3(21)+1=2^{6}$ and $a_3=85$ needs to have $9$ steps since $3(85)+1=2^8$. You can check that this is true for the rest of numbers in the sequence by choosing randomly.
\end{proof}

Next what I'm going to do is track another sequence function. The question is how to get this sequence function?. Well, the answer is simple if I draw another figure just like the previous but I have to use the previous figure to draw the next. First of all, for $k \in \mathbb{N}$, let me find the least $\beta_1=a_k2^{2n} \text{ or }=a_k2^{2n-1}\text{ for }n=1$ such that $\beta_1-1$ should be divisible by $3$, where $a_k \in \{ a_n: a_n-a_{n-1}=2^{2n},a_0=1\text{ and }n\in \mathbb{N}\}$ and $(\beta_1-1)/3$ will be the initial term for the general term of the next sequence of numbers. So after two or three simple trials you can get that $a_k=5$ and $\beta_1=5(2)=10$ which is the least among others and you can verify that $\beta_1-1=10-1=9$ which is divisible by $3$ and $(\beta_1-1)/3=3=a_0$ is the initial term for the next sequence function. Now let's see the following figure to be more clear for the next and the previous work.
\begin{theorem}
Collatz conjecture is true for all odd numbers in the set of the recurrence relation 
\begin{equation*}
 \{ b_n: b_n-b_{n-1}=5(2^{2n-1}),b_0=3\text{ and }n\in \mathbb{N}\}
\end{equation*}
\begin{equation*}
\Rightarrow b_n=\frac{1}{3}\bigg[5(2^{2n+1})-1\bigg],\text{ }n\geq 0
\end{equation*}  and for each $b_{n}$, there are $N_{5}+2n+2$ number of steps (or Collatz function operation) needed to get $1$.   
\end{theorem}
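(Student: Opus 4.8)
The plan is to reduce every element $b_n$ of the sequence to the single odd number $5$, whose Collatz behaviour is already settled in Theorem 1, and then to count the intervening operations. First I would solve the stated recurrence by telescoping $b_n - b_{n-1} = 5(2^{2n-1})$ from $b_0 = 3$, which gives
\begin{equation*}
b_n = 3 + 5\sum_{j=1}^{n} 2^{2j-1} = 3 + 10\cdot\frac{4^{n}-1}{3} = \frac{5(2^{2n+1})-1}{3},
\end{equation*}
exactly the closed form in the statement. I would then record two elementary facts about $b_n$: it is a positive integer, since $2^{2n+1}\equiv -1 \pmod 3$ forces $5(2^{2n+1})-1\equiv 0 \pmod 3$; and it is odd, because $5(2^{2n+1})-1$ is odd and an odd multiple of $3$ stays odd after dividing by $3$.

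The crux is a single algebraic identity. Because $b_n$ is odd, one application of the Collatz map yields
\begin{equation*}
3b_n + 1 = 5(2^{2n+1}) - 1 + 1 = 5(2^{2n+1}).
\end{equation*}
This is precisely how the recurrence was engineered: the odd step lands exactly on a power of two times $5$. The value $5(2^{2n+1})$ is even, so the map now only halves, and dividing by $2$ exactly $2n+1$ times carries $5(2^{2n+1})$ down through $5(2^{2n}), \dots, 5(2^{1})$ to $5$. Thus the passage from $b_n$ to $5$ uses one tripling step plus $2n+1$ halving steps, i.e. $2n+2$ operations in total.

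Finally I would invoke Theorem 1, in which $5$ appears as the term $a_1$ of the first sequence and is shown to reach $1$; by the Definition, $N_5$ is exactly the finite number of operations from $5$ to $1$. Concatenating the two stages, the total count from $b_n$ to $1$ is $(2n+2) + N_5 = N_5 + 2n + 2$, and in particular the orbit terminates at $1$, as claimed. There is no deep obstacle here: once the identity $3b_n+1 = 5(2^{2n+1})$ is in hand the trajectory becomes completely explicit, and what remains is the geometric-sum evaluation and careful step bookkeeping. The only point demanding care is the indexing of the halving stage, namely that the chain ends at $5$ rather than at $5(2)$ or at a non-integer, which is why I would write out $5(2^{2n+1})\to 5(2^{2n})\to\cdots\to 5$ explicitly rather than merely assert the count.
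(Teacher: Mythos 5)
Your proposal is correct and follows essentially the same route as the paper: the key identity $3b_n+1=5(2^{2n+1})$, the reduction to the number $5$ by $2n+1$ halvings after one tripling, and the appending of the $N_5$ steps from $5$ to $1$. In fact your version is tighter than the paper's, since you establish the identity and the step count for general $n$ by telescoping and a modular check, whereas the paper verifies them only on the first few terms and a figure.
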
 
\begin{proof}
\newpage
\tikzstyle{rect}=[draw,rectangle,fill=blue!20,text width=3em,text centered, minimum height=2em]
\tikzstyle{ellip}=[draw,ellipse,fill=white!20, minimum height=2em]
\tikzstyle{circ}=[draw, circle, fill=white!20, minimum width=2pt,inner sep=3pt]
\tikzstyle{diam}=[draw,diamond,fill=white!20,text width=8em, text badly centered, inner sep=0pt]
\tikzstyle{line}=[draw,-latex']
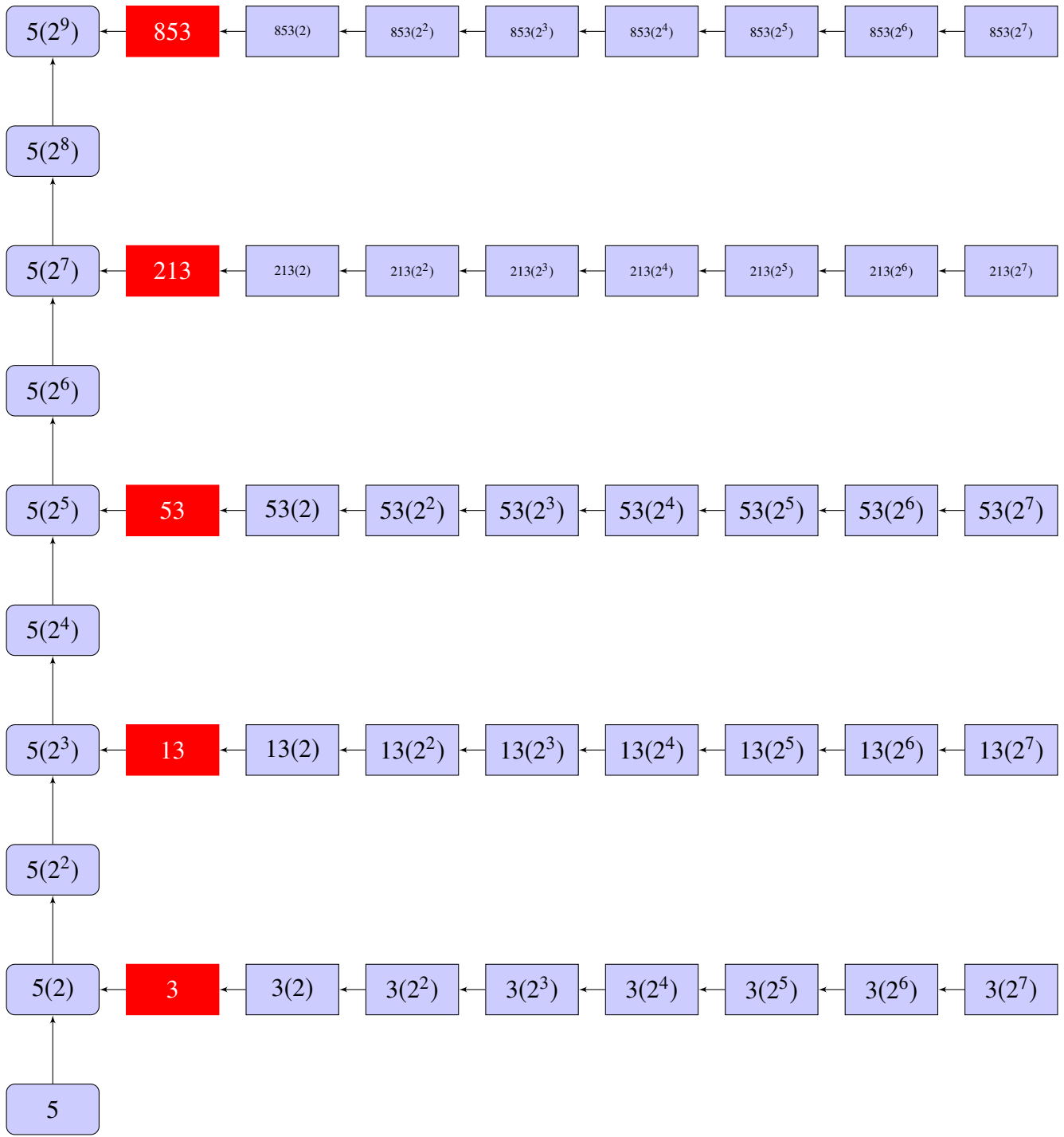
\begin{figure}[h!]
\begin{center}
\begin{tikzpicture}[node distance=1cm, auto]
\node [rect, rounded corners] (step1) {$5$};
\node [rect, rounded corners, above of=step1,node distance=2cm] (step2) {$5(2)$};
\node [rect, rounded corners, above of=step2,node distance=2cm] (step3) {$5(2^2)$};
\node [rect, rounded corners, above of=step3,node distance=2cm] (step4) {$5(2^3)$};
\node [rect, rounded corners, above of=step4,node distance=2cm] (step5) {$5(2^4)$};
\node [rect, rounded corners, above of=step5,node distance=2cm] (step6) {$5(2^5)$};
\node [rect, rounded corners, above of=step6,node distance=2cm] (step7) {$5(2^6)$};
\node [rect, rounded corners, above of=step7,node distance=2cm] (step8) {$5(2^7)$};
\node [rect, rounded corners, above of=step8,node distance=2cm] (step9) {$5(2^8)$};
\node [rect, rounded corners, above of=step9,node distance=2cm] (step10) {$5(2^9)$};
\node [rect, right of=step2,node distance=2cm,color=red] (step11) {$\color{white}3$};
\node [rect, right of=step11,node distance=2cm] (step12) {$3(2)$};
\node [rect, right of=step12,node distance=2cm] (step13) {$3(2^2)$};
\node [rect, right of=step13,node distance=2cm] (step14) {$3(2^3)$};
\node [rect, right of=step14,node distance=2cm] (step15) {$3(2^4)$};
\node [rect, right of=step15,node distance=2cm] (step16) {$3(2^5)$};
\node [rect, right of=step16,node distance=2cm] (step17) {$3(2^6)$};
\node [rect, right of=step17,node distance=2cm] (step18) {$3(2^7)$};
\node [rect, right of=step4,node distance=2cm,color=red] (step19) {$\color{white}13$};
\node [rect, right of=step19,node distance=2cm] (step20) {$13(2)$};
\node [rect, right of=step20,node distance=2cm] (step21) {$13(2^2)$};
\node [rect, right of=step21,node distance=2cm] (step22) {$13(2^3)$};
\node [rect, right of=step22,node distance=2cm] (step23) {$13(2^4)$};
\node [rect, right of=step23,node distance=2cm] (step24) {$13(2^5)$};
\node [rect, right of=step24,node distance=2cm] (step25) {$13(2^6)$};
\node [rect, right of=step25,node distance=2cm] (step26) {$13(2^7)$};

\node [rect, right of=step6,node distance=2cm,color=red] (step27) {$\color{white}53$};
\node [rect, right of=step27,node distance=2cm] (step28) {$53(2)$};
\node [rect, right of=step28,node distance=2cm] (step29) {$53(2^2)$};
\node [rect, right of=step29,node distance=2cm] (step30) {$53(2^3)$};
\node [rect, right of=step30,node distance=2cm] (step31) {$53(2^4)$};
\node [rect, right of=step31,node distance=2cm] (step32) {$53(2^5)$};
\node [rect, right of=step32,node distance=2cm] (step33) {$53(2^6)$};
\node [rect, right of=step33,node distance=2cm] (step34) {$53(2^7)$};

\node [rect, right of=step8,node distance=2cm,color=red] (step35) {\color{white} $213$};
\node [rect, right of=step35,node distance=2cm] (step36) {\tiny $213(2)$};
\node [rect, right of=step36,node distance=2cm] (step37) {\tiny $213(2^2)$};
\node [rect, right of=step37,node distance=2cm] (step38) {\tiny $213(2^3)$};
\node [rect, right of=step38,node distance=2cm] (step39) {\tiny $213(2^4)$};
\node [rect, right of=step39,node distance=2cm] (step40) {\tiny $213(2^5)$};
\node [rect, right of=step40,node distance=2cm] (step41) {\tiny $213(2^6)$};
\node [rect, right of=step41,node distance=2cm] (step42) {\tiny $213(2^7)$};

\node [rect, right of=step10,node distance=2cm,color=red] (step43) {\color{white} $853$};
\node [rect, right of=step43,node distance=2cm] (step44) {\tiny $853(2)$};
\node [rect, right of=step44,node distance=2cm] (step45) {\tiny $853(2^2)$};
\node [rect, right of=step45,node distance=2cm] (step46) {\tiny $853(2^3)$};
\node [rect, right of=step46,node distance=2cm] (step47) {\tiny $853(2^4)$};
\node [rect, right of=step47,node distance=2cm] (step48) {\tiny $853(2^5)$};
\node [rect, right of=step48,node distance=2cm] (step49) {\tiny $853(2^6)$};
\node [rect, right of=step49,node distance=2cm] (step50) {\tiny $853(2^7)$};

\path [line] (step1)--(step2);
\path [line] (step2)--(step3);
\path [line] (step3)--(step4);
\path [line] (step4)--(step5);
\path [line] (step5)--(step6);
\path [line] (step6)--(step7);
\path [line] (step7)--(step8);
\path [line] (step8)--(step9);
\path [line] (step9)--(step10);

\path [line] (step18)--(step17);
\path [line] (step17)--(step16);
\path [line] (step16)--(step15);
\path [line] (step15)--(step14);
\path [line] (step14)--(step13);
\path [line] (step13)--(step12);
\path [line] (step12)--(step11);
\path [line] (step11)--(step2);

\path [line] (step26)--(step25);
\path [line] (step25)--(step24);
\path [line] (step24)--(step23);
\path [line] (step23)--(step22);
\path [line] (step22)--(step21);
\path [line] (step21)--(step20);
\path [line] (step20)--(step19);
\path [line] (step19)--(step4);

\path [line] (step34)--(step33);
\path [line] (step33)--(step32);
\path [line] (step32)--(step31);
\path [line] (step31)--(step30);
\path [line] (step30)--(step29);
\path [line] (step29)--(step28);
\path [line] (step28)--(step27);
\path [line] (step27)--(step6);

\path [line] (step42)--(step41);
\path [line] (step41)--(step40);
\path [line] (step40)--(step39);
\path [line] (step39)--(step38);
\path [line] (step38)--(step37);
\path [line] (step37)--(step36);
\path [line] (step36)--(step35);
\path [line] (step35)--(step8);

\path [line] (step50)--(step49);
\path [line] (step49)--(step48);
\path [line] (step48)--(step47);
\path [line] (step47)--(step46);
\path [line] (step46)--(step45);
\path [line] (step45)--(step44);
\path [line] (step44)--(step43);
\path [line] (step43)--(step10);

\end{tikzpicture}
\caption{\small Proof of Collatz conjecture for the sequence of numbers $\{3,13,53,213,853\cdots \}$ combined with Fig 1.}
\end{center}
\end{figure}

\justify
As you can see from figure $2$, we have sequence of numbers $\{3,13,53,213,853\cdots \}$. All elements of this sequence are odd numbers with $b_0=3,b_1=13,b_2=53,b_3=213,b_4=853\cdots$, and $3(3)+1=5(2^1),3(13)+1=5(2^{3}),3(53)+1=5(2^5),3(213)+1=5(2^7),3(853)+1=5(2^9)\cdots$. The question is how this sequence of numbers formed?. The answer is simple. You can observe that $13=3+5(2^1),53=13+5(2^3),213=53+5(2^5),853=213+5(2^7)\cdots$. Therefore the recurrence relation is $b_n=a_{n-1}+5(2^{2n-1})$ or $b_n-b_{n-1}=5(2^{2n-1})$ with $b_0=3$ and $n\in \mathbb{N}$.\\ Hence we can conclude that Collatz analysis is true for all numbers in the set
\begin{equation*}
 \{ b_n: b_n-b_{n-1}=5(2^{2n-1}),b_0=3\text{ and }n\in \mathbb{N}\}
\end{equation*}
If we solve this non-homogeneous recurrence relation we get,
\begin{equation*}
b_n=\frac{1}{3}\bigg[5(2^{2n+1})-1\bigg],\text{ }n\geq 0
\end{equation*} 
This is the general term for the above sequence of numbers. We might ask "How many number of steps we need to reach to $1$ for each number in the above sequence by applying Collatz function?". Here is the answer. For example for $n=0$, we have $b_0=3$, therefore according to Collatz conjecture $3(b_0)+1=10=5(2)$ since $b_0$ is odd. As you can see from the above figure that we need to have $7$ steps, which means $2+5$ steps, where $2=1+1$ is the exponent of $2$ plus $1$ from the equation on the right side of $3(b_0)+1=10=5(2)$ and $5$ is the number steps needed for $5$, so $b_0=3$ needs to have $7=5+2$ number of steps to reach to $1$ by applying Collatz  function repeatedly. Similarly for $b_1=13$ we need to have $5+4=9$ steps to reach to $1$ since $3(13)+1=5(2^{3})$, that is take the exponent of $2$ on the right hand side of $3(13)+1=5(2^3)$ and add $1$, that is, $3+1$ steps plus $5$ steps. Thus for $b_2=53$ we need to have $11$ steps since $3(53)+1=5(2^{5})$ and $b_3=213$ needs to have $13$ steps since $3(213)+1=5(2^7)$. You can check that this is true for the rest of numbers in the sequence by choosing randomly.
\end{proof}
Next, what I'm going to do is, track another sequence function. The question is how to get this sequence function?. Well, the answer is simple if I draw another figure $3$ just like the previous but I have to use the previous figures to draw the next. First of all, for $k\in \mathbb{N}$, let me find the least $\beta_2=b_k2^{2n} \text{ or }=b_k2^{2n-1}\text{ for }n=1$ such that $\beta_2-1$ should be divisible by $3$, where $b_k \in \{ b_n: b_n-b_{n-1}=5(2^{2n-1}),b_0=3\text{ and }n\in \mathbb{N}\}$ and $(\beta_2-1)/3$ will be the initial term for the general term of the next sequence of numbers. So after two or three simple trials you can get that $b_k=13$ and $\beta_2=13(2^2)=52$ which is the least among others and you can verify that $\beta_2-1=52-1=51$ which is divisible by $3$ and $(\beta_2-1)/3=17=b_0$ is the initial term for the next sequence function. Now let's see the following figure $3$ to be more clear for the next and the previous work.
\begin{theorem}
Collatz conjecture is true for all odd numbers in the set of the recurrence relation 
\begin{equation*}
 \{ c_n: c_n-c_{n-1}=13(2^{2n}),c_0=17\text{ and }n\in \mathbb{N}\}
\end{equation*}
\begin{equation*}
c_n=\frac{1}{3}\bigg[13(2^{2n+2})-1\bigg],\text{ }n\geq 0
\end{equation*}   and for each $c_{n}$, there are $N_{13}+2n+3$ number of steps (or Collatz function operation) needed to get $1$.   
\end{theorem}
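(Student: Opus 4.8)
The plan is to mirror exactly the argument used for Theorems 1 and 2, since this statement is simply the next link in the same inductive chain. First I would read the recurrence off the sequence $\{17,69,277,1109,\dots\}$ that the accompanying figure exhibits: the consecutive differences are $69-17=13(2^2)$, $277-69=13(2^4)$, $1109-277=13(2^6),\dots$, which establishes $c_n-c_{n-1}=13(2^{2n})$ with $c_0=17$. This is a first-order linear non-homogeneous recurrence, so I would solve it by summing the geometric series:
\begin{equation*}
c_n=c_0+\sum_{j=1}^{n}13(2^{2j})=17+13\sum_{j=1}^{n}4^{j}=17+13\cdot\frac{4^{n+1}-4}{3}=\frac{1}{3}\bigl[13(2^{2n+2})-1\bigr],
\end{equation*}
which reproduces the closed form claimed in the statement.

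The heart of the proof is the single algebraic identity obtained by clearing the denominator, namely
\begin{equation*}
3c_n+1=13(2^{2n+2}).
\end{equation*}
Because $3c_n=13(2^{2n+2})-1$ is odd and $3$ is odd, $c_n$ is itself odd for every $n\ge 0$, so the first Collatz step applied to $c_n$ is the tripling branch $m\mapsto 3m+1$ and lands exactly on $13(2^{2n+2})$. From there the trajectory is forced: $2n+2$ successive halvings (each an application of the even branch) strip off the factor $2^{2n+2}$ and reach the odd number $13$, after which the remaining path to $1$ takes precisely $N_{13}$ further steps by Definition 1. Adding the one tripling step, the $2n+2$ halvings, and the $N_{13}$ tail gives
\begin{equation*}
1+(2n+2)+N_{13}=N_{13}+2n+3
\end{equation*}
steps, as asserted.

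The point that actually needs care — and where this theorem leans on the induction — is that $N_{13}$ must be a finite, already-known constant; otherwise the step count is meaningless and, more seriously, we would not yet know the trajectory terminates. I would resolve this by invoking Theorem 2: there $13=b_1$ belongs to the set $\{b_n\}$, whose Collatz analysis has already been established, so $13$ reaches $1$ in finitely many steps and $N_{13}$ is well-defined. Concretely, the seed $c_0=17$ was produced precisely as $(\beta_2-1)/3$ with $\beta_2=13(2^2)=52$, so the entire branch hangs off the value $13$ handled in the previous link. Thus the only genuine obstacle is the bookkeeping that ties $N_{13}$ back to the earlier sequence; the recurrence solution and the halving count are routine. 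Verifying the identity $3c_n+1=13(2^{2n+2})$ and the oddness of $c_n$ on a sample value (e.g.\ $c_1=69$ with $3\cdot 69+1=208=13\cdot 2^4$, reaching $13$ after $4$ halvings) would serve as the sanity check, exactly as in the earlier proofs.
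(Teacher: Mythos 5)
Your proposal is correct and follows essentially the same route as the paper: read off the recurrence from the sequence $\{17,69,277,\dots\}$, solve it to get $c_n=\frac{1}{3}\bigl[13(2^{2n+2})-1\bigr]$, use the identity $3c_n+1=13(2^{2n+2})$ to force one tripling step plus $2n+2$ halvings down to $13$, and append $N_{13}$ steps from the previous theorem. If anything, your write-up is tighter than the paper's, which establishes the identity and the step count only for $n=0,1,2,3$ via the figure and then asks the reader to check further cases at random, whereas you verify $3c_n+1=13(2^{2n+2})$ and the oddness of $c_n$ for all $n$ directly from the closed form.
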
 
 \begin{proof}
\newpage
\tikzstyle{rect}=[draw,rectangle,fill=blue!20,text width=3em,text centered, minimum height=2em]
\tikzstyle{ellip}=[draw,ellipse,fill=white!20, minimum height=2em]
\tikzstyle{circ}=[draw, circle, fill=white!20, minimum width=2pt,inner sep=3pt]
\tikzstyle{diam}=[draw,diamond,fill=white!20,text width=8em, text badly centered, inner sep=0pt]
\tikzstyle{line}=[draw,-latex']
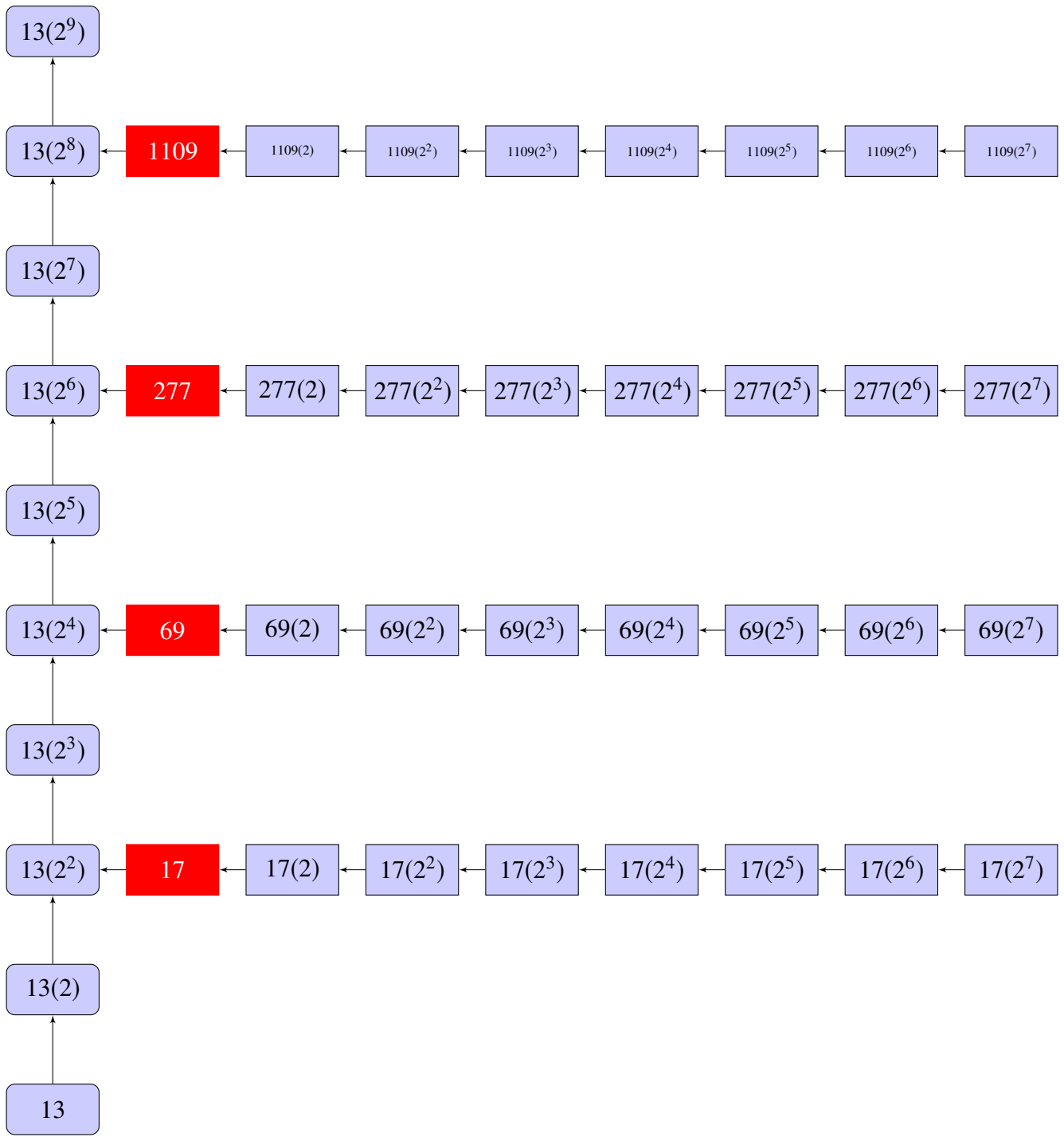
\begin{figure}[h!]
\begin{center}
\begin{tikzpicture}[node distance=1cm, auto]
\node [rect, rounded corners] (step1) {$13$};
\node [rect, rounded corners, above of=step1,node distance=2cm] (step2) {$13(2)$};
\node [rect, rounded corners, above of=step2,node distance=2cm] (step3) {$13(2^2)$};
\node [rect, rounded corners, above of=step3,node distance=2cm] (step4) {$13(2^3)$};
\node [rect, rounded corners, above of=step4,node distance=2cm] (step5) {$13(2^4)$};
\node [rect, rounded corners, above of=step5,node distance=2cm] (step6) {$13(2^5)$};
\node [rect, rounded corners, above of=step6,node distance=2cm] (step7) {$13(2^6)$};
\node [rect, rounded corners, above of=step7,node distance=2cm] (step8) {$13(2^7)$};
\node [rect, rounded corners, above of=step8,node distance=2cm] (step9) {$13(2^8)$};
\node [rect, rounded corners, above of=step9,node distance=2cm] (step10) {$13(2^9)$};
\node [rect, right of=step3,node distance=2cm,color=red] (step11) {\color{white}$17$};
\node [rect, right of=step11,node distance=2cm] (step12) {$17(2)$};
\node [rect, right of=step12,node distance=2cm] (step13) {$17(2^2)$};
\node [rect, right of=step13,node distance=2cm] (step14) {$17(2^3)$};
\node [rect, right of=step14,node distance=2cm] (step15) {$17(2^4)$};
\node [rect, right of=step15,node distance=2cm] (step16) {$17(2^5)$};
\node [rect, right of=step16,node distance=2cm] (step17) {$17(2^6)$};
\node [rect, right of=step17,node distance=2cm] (step18) {$17(2^7)$};
\node [rect, right of=step5,node distance=2cm,color=red] (step19) {\color{white}$69$};
\node [rect, right of=step19,node distance=2cm] (step20) {$69(2)$};
\node [rect, right of=step20,node distance=2cm] (step21) {$69(2^2)$};
\node [rect, right of=step21,node distance=2cm] (step22) {$69(2^3)$};
\node [rect, right of=step22,node distance=2cm] (step23) {$69(2^4)$};
\node [rect, right of=step23,node distance=2cm] (step24) {$69(2^5)$};
\node [rect, right of=step24,node distance=2cm] (step25) {$69(2^6)$};
\node [rect, right of=step25,node distance=2cm] (step26) {$69(2^7)$};

\node [rect, right of=step7,node distance=2cm,color=red] (step27) {\color{white}$277$};
\node [rect, right of=step27,node distance=2cm] (step28) {$277(2)$};
\node [rect, right of=step28,node distance=2cm] (step29) {$277(2^2)$};
\node [rect, right of=step29,node distance=2cm] (step30) {$277(2^3)$};
\node [rect, right of=step30,node distance=2cm] (step31) {$277(2^4)$};
\node [rect, right of=step31,node distance=2cm] (step32) {$277(2^5)$};
\node [rect, right of=step32,node distance=2cm] (step33) {$277(2^6)$};
\node [rect, right of=step33,node distance=2cm] (step34) {$277(2^7)$};

\node [rect, right of=step9,node distance=2cm,color=red] (step35) {\color{white} $1109$};
\node [rect, right of=step35,node distance=2cm] (step36) {\tiny $1109(2)$};
\node [rect, right of=step36,node distance=2cm] (step37) {\tiny $1109(2^2)$};
\node [rect, right of=step37,node distance=2cm] (step38) {\tiny $1109(2^3)$};
\node [rect, right of=step38,node distance=2cm] (step39) {\tiny $1109(2^4)$};
\node [rect, right of=step39,node distance=2cm] (step40) {\tiny $1109(2^5)$};
\node [rect, right of=step40,node distance=2cm] (step41) {\tiny $1109(2^6)$};
\node [rect, right of=step41,node distance=2cm] (step42) {\tiny $1109(2^7)$};

\path [line] (step1)--(step2);
\path [line] (step2)--(step3);
\path [line] (step3)--(step4);
\path [line] (step4)--(step5);
\path [line] (step5)--(step6);
\path [line] (step6)--(step7);
\path [line] (step7)--(step8);
\path [line] (step8)--(step9);
\path [line] (step9)--(step10);

\path [line] (step18)--(step17);
\path [line] (step17)--(step16);
\path [line] (step16)--(step15);
\path [line] (step15)--(step14);
\path [line] (step14)--(step13);
\path [line] (step13)--(step12);
\path [line] (step12)--(step11);
\path [line] (step11)--(step3);

\path [line] (step26)--(step25);
\path [line] (step25)--(step24);
\path [line] (step24)--(step23);
\path [line] (step23)--(step22);
\path [line] (step22)--(step21);
\path [line] (step21)--(step20);
\path [line] (step20)--(step19);
\path [line] (step19)--(step5);

\path [line] (step34)--(step33);
\path [line] (step33)--(step32);
\path [line] (step32)--(step31);
\path [line] (step31)--(step30);
\path [line] (step30)--(step29);
\path [line] (step29)--(step28);
\path [line] (step28)--(step27);
\path [line] (step27)--(step7);

\path [line] (step42)--(step41);
\path [line] (step41)--(step40);
\path [line] (step40)--(step39);
\path [line] (step39)--(step38);
\path [line] (step38)--(step37);
\path [line] (step37)--(step36);
\path [line] (step36)--(step35);
\path [line] (step35)--(step9);

\end{tikzpicture}
\caption{\small Proof of Collatz conjecture for the sequence of numbers $\{17,69,277,1109\cdots \}$ combined with Fig 1. and Fig 2.}
\end{center}
\end{figure}

\justify
As you can see from figure $3$ we have sequence of numbers $\{17,69,277,1109\cdots \}$. All elements of this sequence are odd numbers with $c_0=17,c_1=69,c_2=277,c_3=1109\cdots$, and $3(17)+1=13(2^2),3(69)+1=13(2^{4}),3(277)+1=13(2^6),3(1109)+1=13(2^8),\cdots$. The question is how this sequence of numbers formed?. The answer is simple. You can observe that $69=17+13(2^2),277=69+13(2^4),1109=277+13(2^6),\cdots$. Therefore the general term is $c_n=a_{n-1}+13(2^{2n})$ or $c_n-a_{n-1}=13(2^{2n})$ with $c_0=17$ and $n\in \mathbb{N}$.\\ Hence we can conclude that Collatz analysis is true for all numbers in the set
\begin{equation*}
 \{ c_n: c_n-c_{n-1}=13(2^{2n}),c_0=17\text{ and }n\in \mathbb{N}\}
\end{equation*}

If we solve this non-homogeneous recurrence relation we get,
\begin{equation*}
c_n=\frac{1}{3}\bigg[13(2^{2n+2})-1\bigg],\text{ }n\geq 0
\end{equation*} 
This is the general term for the above sequence of numbers. We might ask "How many number of steps we need to reach to 1 for each number in the above sequence by applying Collatz function?". Here is the answer. For example, for $n=0$, we have $c_0=17$, therefore according to Collatz conjecture $3(17)+1=52=13(2^2)$ since $c_0$ is odd. As you can see from the above figure that we need to have $12$ steps, which means $3+9$ steps, where $3=2+1$ is the exponent of $2$ plus $1$ from the equation on the right side of $3(c_0)+1=52=13(2^2)$ and $9$ is the number steps needed for $13$, so $c_0=17$ needs to have $12=9+3$ number of steps to reach to $1$ by applying Collatz  function repeatedly. Similarly for $c_1=69$ we need to have $9+5=14$ steps to reach to $1$ since $3(69)+1=13(2^{4})$, that is take the exponent of $2$ on the right hand side of $3(69)+1=13(2^4)$ and add $1$, that is, $4+1$ steps plus $9$ steps. Thus for $a_2=277$ we need to have $16$ steps since $3(277)+1=13(2^{6})$ and $a_3=1109$ needs to have $18$ steps since $3(1109)+1=13(2^8)$. You can check that this is true for the rest of numbers in the sequence by choosing randomly.
\end{proof}
Next, what I'm going to do is track another sequence function. The question is how to get this sequence function?. Well, the answer is simple if I draw another picture just like the previous but I have to use the previous pictures to draw the next. First of all, for $k \in \mathbb{N}$, let me find the least $\beta_3=c_k2^{2n} \text{ or }=c_k2^{2n-1}\text{ for }n=1$ such that $\beta_3-1$ should be divisible by $3$, where $c_k \in \{ c_n: c_n-c_{n-1}=13(2^{2n}),c_0=17\text{ and }n\in \mathbb{N}\}$ and $(\beta_3-1)/3$ will be the initial term for the general term of the next sequence of numbers. So after two or three simple trials you can get that $c_k=17$ and $\beta_3=17(2^1)=34$ which is the least among others and you can verify that $\beta_3-1=34-1=33$ which is divisible by $3$ and $(\beta_3-1)/3=11=c_0$ is the initial term for the next sequence function. Now let's see the following figure $4$ to be more clear for the next and the previous work. 
\begin{theorem}
Collatz conjecture is true for all odd numbers in the set of the recurrence relation 
\begin{equation*}
 \{ d_n: d_n-d_{n-1}=17(2^{2n-1}),d_0=11\text{ and }n\in \mathbb{N}\}
\end{equation*}
\begin{equation*}
\Rightarrow d_n=\frac{1}{3}\bigg[17(2^{2n+1})-1\bigg],\text{ }n\geq 0
\end{equation*}    and for each $d_{n}$, there are $N_{17}+2n+2$ number of steps (or Collatz function operation) needed to get $1$.   
\end{theorem}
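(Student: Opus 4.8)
The plan is to mirror the three-part scheme already used for Theorems~1--3: first solve the recurrence in closed form, then verify that every term is an odd integer whose Collatz image is a power of two times a previously-handled odd number, and finally read off the step count by tracing the orbit. Since each theorem in this chain is built so that the base term of the new sequence comes from the preceding one, the only ingredient I cannot supply by direct computation is the finiteness of the orbit of $17$, which I will import from Theorem~3.

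First I would solve the first-order recurrence $d_n - d_{n-1} = 17(2^{2n-1})$ with $d_0 = 11$ by telescoping. Summing the increments gives
\begin{equation*}
d_n = 11 + 17\sum_{j=1}^{n} 2^{2j-1} = 11 + 34\cdot\frac{4^n-1}{3} = \frac{34\cdot 4^n - 1}{3},
\end{equation*}
and since $34\cdot 4^n = 17\cdot 2^{2n+1}$ this is precisely the asserted closed form $d_n = \tfrac{1}{3}\bigl[17(2^{2n+1})-1\bigr]$. To confirm that the right-hand side is a genuine positive integer I would note $17\equiv 2$ and $2^{2n+1}\equiv 2 \pmod 3$, so $17\cdot 2^{2n+1}\equiv 1\pmod 3$ and the numerator is divisible by $3$.

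Next I would extract the two facts that drive the Collatz analysis. Clearing the denominator yields $3d_n + 1 = 17\cdot 2^{2n+1}$, so the Collatz image of $d_n$ is a pure power of two times $17$. Moreover, because $17\cdot 2^{2n+1}$ is even, $3d_n = 17\cdot 2^{2n+1} - 1$ is odd, and since $3$ is odd this forces $d_n$ to be odd. Hence the Collatz rule applied to $d_n$ takes the $3n+1$ branch and produces $f(d_n) = 17\cdot 2^{2n+1}$ in a single step.

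Finally I would trace the trajectory and count. Starting from the odd number $d_n$, one step gives $17\cdot 2^{2n+1}$; then $2n+1$ successive halvings bring this down to $17$; and from $17$ the orbit reaches $1$ in $N_{17}$ steps in the sense of Definition~1. Adding these contributions gives a total of $1 + (2n+1) + N_{17} = N_{17} + 2n + 2$ steps, exactly as stated. I expect the substantive point to be not any calculation internal to this theorem but the appeal to $N_{17}$ being well-defined and finite: this is where the inductive chaining is essential, since $17 = c_0$ lies in the set of Theorem~3, which already certifies that $17$ reaches $1$. This parallels how Theorem~3 relied on $13 = b_1$ from Theorem~2 and Theorem~2 on $5 = a_1$ from Theorem~1, so the finiteness of $N_{17}$ is inherited rather than proved afresh.
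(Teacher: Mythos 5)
Your proposal is correct and follows essentially the same route as the paper: derive the closed form from the recurrence, observe that $3d_n+1=17\cdot 2^{2n+1}$ so one odd step followed by $2n+1$ halvings lands on $17$, and add $N_{17}$ to get $N_{17}+2n+2$. The only difference is that you carry out the computation in full generality and explicitly flag the dependence on $N_{17}$ being finite via Theorem~3, whereas the paper verifies the pattern on the first few terms ($11, 45, 181, 725$) and asserts the rest by inspection of the figure.
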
 
\begin{proof}
\newpage
\tikzstyle{rect}=[draw,rectangle,fill=blue!20,text width=3em,text centered, minimum height=2em]
\tikzstyle{ellip}=[draw,ellipse,fill=white!20, minimum height=2em]
\tikzstyle{circ}=[draw, circle, fill=white!20, minimum width=2pt,inner sep=3pt]
\tikzstyle{diam}=[draw,diamond,fill=white!20,text width=8em, text badly centered, inner sep=0pt]
\tikzstyle{line}=[draw,-latex']
\begin{figure}[h!]
\begin{center}
\begin{tikzpicture}[node distance=1cm, auto]
\node [rect, rounded corners] (step1) {$17$};
\node [rect, rounded corners, above of=step1,node distance=2cm] (step2) {$17(2)$};
\node [rect, rounded corners, above of=step2,node distance=2cm] (step3) {$17(2^2)$};
\node [rect, rounded corners, above of=step3,node distance=2cm] (step4) {$17(2^3)$};
\node [rect, rounded corners, above of=step4,node distance=2cm] (step5) {$17(2^4)$};
\node [rect, rounded corners, above of=step5,node distance=2cm] (step6) {$17(2^5)$};
\node [rect, rounded corners, above of=step6,node distance=2cm] (step7) {$17(2^6)$};
\node [rect, rounded corners, above of=step7,node distance=2cm] (step8) {$17(2^7)$};
\node [rect, rounded corners, above of=step8,node distance=2cm] (step9) {$17(2^8)$};
\node [rect, rounded corners, above of=step9,node distance=2cm] (step10) {$17(2^9)$};
\node [rect, right of=step2,node distance=2cm,color=red] (step11) {\color{white}$11$};
\node [rect, right of=step11,node distance=2cm] (step12) {$11(2)$};
\node [rect, right of=step12,node distance=2cm] (step13) {$11(2^2)$};
\node [rect, right of=step13,node distance=2cm] (step14) {$11(2^3)$};
\node [rect, right of=step14,node distance=2cm] (step15) {$11(2^4)$};
\node [rect, right of=step15,node distance=2cm] (step16) {$11(2^5)$};
\node [rect, right of=step16,node distance=2cm] (step17) {$11(2^6)$};
\node [rect, right of=step17,node distance=2cm] (step18) {$11(2^7)$};
\node [rect, right of=step4,node distance=2cm,color=red] (step19) {\color{white}$45$};
\node [rect, right of=step19,node distance=2cm] (step20) {$45(2)$};
\node [rect, right of=step20,node distance=2cm] (step21) {$45(2^2)$};
\node [rect, right of=step21,node distance=2cm] (step22) {$45(2^3)$};
\node [rect, right of=step22,node distance=2cm] (step23) {$45(2^4)$};
\node [rect, right of=step23,node distance=2cm] (step24) {$45(2^5)$};
\node [rect, right of=step24,node distance=2cm] (step25) {$45(2^6)$};
\node [rect, right of=step25,node distance=2cm] (step26) {$45(2^7)$};

\node [rect, right of=step6,node distance=2cm,color=red] (step27) {\color{white}$181$};
\node [rect, right of=step27,node distance=2cm] (step28) {$181(2)$};
\node [rect, right of=step28,node distance=2cm] (step29) {$181(2^2)$};
\node [rect, right of=step29,node distance=2cm] (step30) {$181(2^3)$};
\node [rect, right of=step30,node distance=2cm] (step31) {$181(2^4)$};
\node [rect, right of=step31,node distance=2cm] (step32) {$181(2^5)$};
\node [rect, right of=step32,node distance=2cm] (step33) {$181(2^6)$};
\node [rect, right of=step33,node distance=2cm] (step34) {$181(2^7)$};

\node [rect, right of=step8,node distance=2cm,color=red] (step35) {\color{white} $725$};
\node [rect, right of=step35,node distance=2cm] (step36) {\tiny $725(2)$};
\node [rect, right of=step36,node distance=2cm] (step37) {\tiny $725(2^2)$};
\node [rect, right of=step37,node distance=2cm] (step38) {\tiny $725(2^3)$};
\node [rect, right of=step38,node distance=2cm] (step39) {\tiny $725(2^4)$};
\node [rect, right of=step39,node distance=2cm] (step40) {\tiny $725(2^5)$};
\node [rect, right of=step40,node distance=2cm] (step41) {\tiny $725(2^6)$};
\node [rect, right of=step41,node distance=2cm] (step42) {\tiny $725(2^7)$};

\path [line] (step1)--(step2);
\path [line] (step2)--(step3);
\path [line] (step3)--(step4);
\path [line] (step4)--(step5);
\path [line] (step5)--(step6);
\path [line] (step6)--(step7);
\path [line] (step7)--(step8);
\path [line] (step8)--(step9);
\path [line] (step9)--(step10);

\path [line] (step18)--(step17);
\path [line] (step17)--(step16);
\path [line] (step16)--(step15);
\path [line] (step15)--(step14);
\path [line] (step14)--(step13);
\path [line] (step13)--(step12);
\path [line] (step12)--(step11);
\path [line] (step11)--(step2);

\path [line] (step26)--(step25);
\path [line] (step25)--(step24);
\path [line] (step24)--(step23);
\path [line] (step23)--(step22);
\path [line] (step22)--(step21);
\path [line] (step21)--(step20);
\path [line] (step20)--(step19);
\path [line] (step19)--(step4);

\path [line] (step34)--(step33);
\path [line] (step33)--(step32);
\path [line] (step32)--(step31);
\path [line] (step31)--(step30);
\path [line] (step30)--(step29);
\path [line] (step29)--(step28);
\path [line] (step28)--(step27);
\path [line] (step27)--(step6);

\path [line] (step42)--(step41);
\path [line] (step41)--(step40);
\path [line] (step40)--(step39);
\path [line] (step39)--(step38);
\path [line] (step38)--(step37);
\path [line] (step37)--(step36);
\path [line] (step36)--(step35);
\path [line] (step35)--(step8);

\end{tikzpicture}
\caption{\small Proof of Collatz conjecture for the sequence of numbers $\{11,45,181,725\cdots \}$ combined with the previous figures.}
\end{center}
\end{figure}
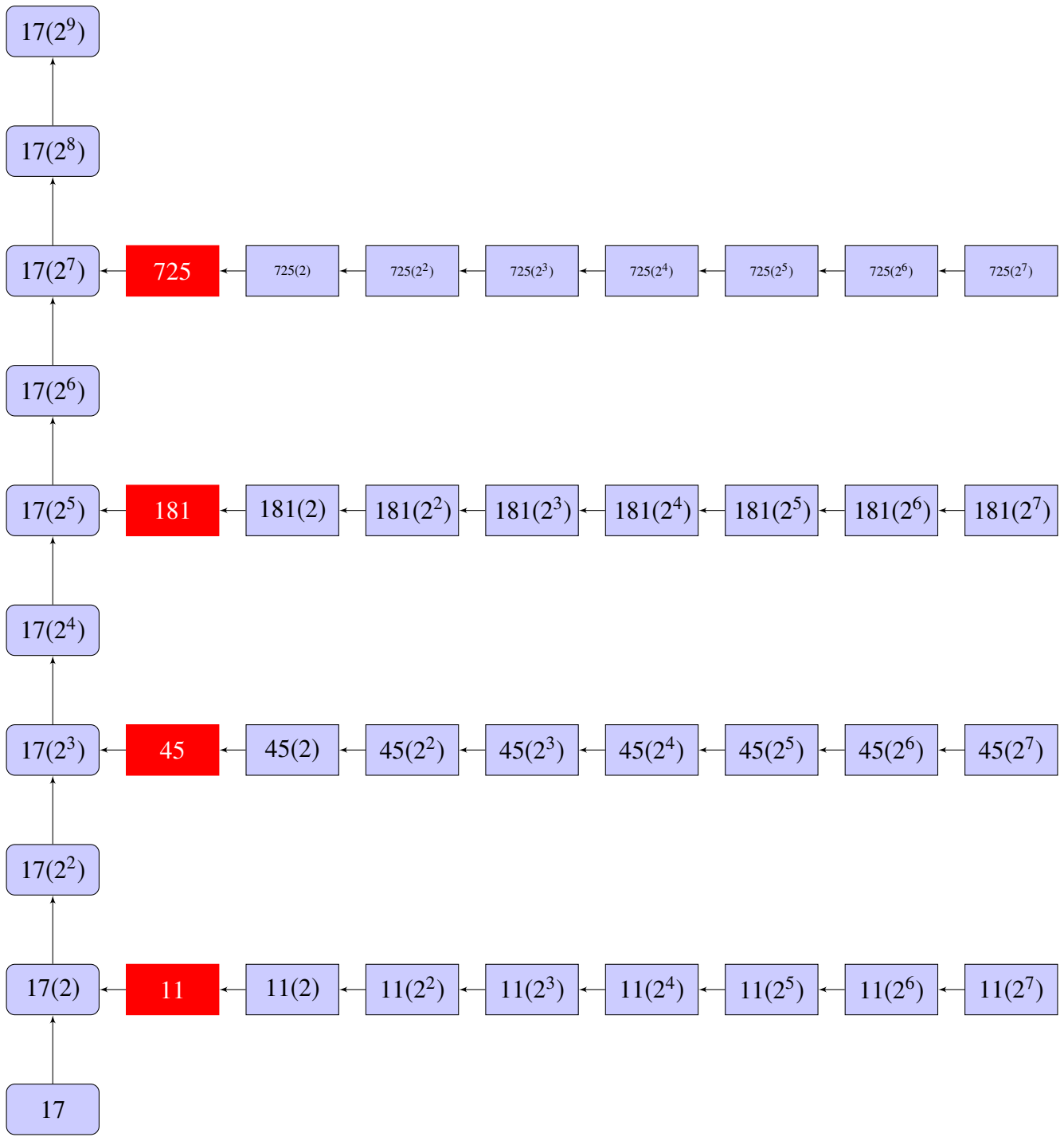

\justify
As you see from the picture we have sequence of numbers $\{11,45,181,725\cdots \}$. All elements of this sequence are odd numbers with $d_0=11,d_1=45,d_2=181,d_3=725\cdots$, and $3(11)+1=17(2^1),3(45)+1=17(2^{3}),3(181)+1=17(2^5),3(725)+1=17(2^7),\cdots$. The question is how this sequence of numbers formed?. The answer is simple. You can observe that $45=11+17(2^1),181=45+17(2^3),725=181+17(2^5),\cdots$. Therefore the recurrence relation is $d_n=d_{n-1}+17(2^{2n-1})$ or $d_n-d_{n-1}=17(2^{2n-1})$ with $d_0=11$ and $n\in \mathbb{N}$.\\ Hence we can conclude that Collatz analysis is true for all numbers in the set
\begin{equation*}
 \{ d_n: d_n-d_{n-1}=17(2^{2n-1}),d_0=11\text{ and }n\in \mathbb{N}\}
\end{equation*}
If we solve this non-homogeneous recurrence relation we get,
\begin{equation*}
d_n=\frac{1}{3}\bigg[17(2^{2n+1})-1\bigg],\text{ }n\geq 0
\end{equation*} 
This is the general term for the above sequence of numbers. We might ask "How many number of steps we need to reach to $1$ for each number in the above sequence by applying Collatz function?". Here is the answer. For example for $n=0$, we have $d_0=11$, therefore according to Collatz conjecture $3(11)+1=34=17(2^1)$ since $d_0$ is odd. As you can see from the above figure that we need to have $14$ steps, which means $2+12$ steps, where $2=1+1$ is the exponent of $2$ plus $1$ from the equation on the right side of $3(d_0)+1=34=17(2^1)$ and $12$ is the number steps needed for $17$, so $d_0=11$ needs to have $14=2+12$ number of steps to reach to $1$ by applying Collatz  function repeatedly. Similarly for $a_1=45$ we need to have $12+4=16$ steps to reach to $1$ since $3(45)+1=17(2^{3})$, that is take the exponent of $2$ on the right hand side of $3(45)+1=17(2^3)$ and add $1$, that is, $3+1$ steps plus $12$ steps. Thus for $d_2=181$ we need to have $18$ steps since $3(181)+1=17(2^{5})$ and $d_3=725$ needs to have $20$ steps since $3(725)+1=17(2^7)$. You can check that this is true for the rest of numbers in the sequence by choosing randomly.
\end{proof}
Next, what I'm going to do is track another sequence function. The question is how to get this sequence function?. Well, the answer is simple if I draw another figure just like the previous but I have to use the previous figures to draw the next. First of all, for $k\in \mathbb{N}$, let me find the least $\beta_4=d_k2^{2n} \text{ or }=d_k2^{2n-1}\text{ for }n=1$ such that $\beta_4-1$ should be divisible by $3$, where $d_k \in \{ d_n: d_n-d_{n-1}=17(2^{2n-1}),d_0=11\text{ and }n\in \mathbb{N}\}$ and $(\beta_4-1)/3$ will be the initial term for the general term of the next sequence of numbers. So after two or three simple trials you can get that $d_k=11$ and $\beta_4=11(2^1)=22$ which is the least among others and you can verify that $\beta_4-1=22-1=21$ which is divisible by $3$ and $(\beta_4-1)/3=7=d_0$ is the initial term for the next sequence function. Now let's see the following figure $5$ to be more clear for the next and the previous work. 
\begin{theorem}
Collatz conjecture is true for all odd numbers in the set of the recurrence relation 
\begin{equation*}
 \{ e_n: e_n-e_{n-1}=11(2^{2n-1}),e_0=7\text{ and }n\in \mathbb{N}\}
\end{equation*}
\begin{equation*}
\Rightarrow e_n=\frac{1}{3}\bigg[11(2^{2n+1})-1\bigg],\text{ }n\geq 0
\end{equation*}    and for each $e_{n}$, there are $N_{11}+2n+2$ number of steps (or Collatz function operation) needed to get $1$.   
\end{theorem}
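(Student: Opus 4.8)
The plan is to follow verbatim the template established in Theorems~2 through 4, since the family $\{e_n\}$ stands in exactly the same relation to the previous family $\{d_n\}$ as each earlier family did to its predecessor. First I would record how the initial term is produced: from the $d$-sequence one takes $\beta_4 = d_0(2) = 11(2^1) = 22$, the least multiple of some $d_k$ by a power of two for which $\beta_4 - 1$ is divisible by $3$, and sets $e_0 = (\beta_4 - 1)/3 = 7$. I would then draw the figure analogous to Figures~1--4, with the column $7, 7(2), 7(2^2), \dots$ attached to the entry $11(2^1)$ of the fourth figure, so that the Collatz trajectory of each new odd number is displayed as feeding into the already-settled trajectory of $11$.

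Next I would list the first several terms $\{7, 29, 117, 469, \dots\}$, confirm that each is odd, and verify the defining identity
\begin{equation*}
3e_n + 1 = 11(2^{2n+1}), \quad n \geq 0,
\end{equation*}
by direct substitution on the first few indices (for instance $3(7)+1 = 22 = 11(2^1)$ and $3(29)+1 = 88 = 11(2^3)$). From the consecutive differences $29 - 7 = 11(2^1)$, $117 - 29 = 11(2^3)$, and so on, I would read off the recurrence $e_n - e_{n-1} = 11(2^{2n-1})$ with $e_0 = 7$, establishing membership in the stated set. To obtain the closed form I would solve this non-homogeneous recurrence by telescoping, summing the geometric series
\begin{equation*}
e_n = e_0 + \sum_{k=1}^{n} 11(2^{2k-1}) = 7 + \frac{11}{2}\sum_{k=1}^{n} 4^{k},
\end{equation*}
and simplifying the finite geometric sum to arrive at $e_n = \frac{1}{3}\bigl[11(2^{2n+1}) - 1\bigr]$, which matches the asserted general term.

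Finally, the step count follows from the displayed identity together with the fact, already available from the treatment of the previous family, that the Collatz conjecture holds for $11$ in $N_{11}$ steps. Applying the Collatz map once to the odd number $e_n$ produces $3e_n + 1 = 11(2^{2n+1})$; then $2n+1$ successive halvings carry $11(2^{2n+1})$ down to $11$, after which $N_{11}$ further operations reach $1$. Hence the total is $1 + (2n+1) + N_{11} = N_{11} + 2n + 2$, as claimed. I expect the only delicate point to be the bookkeeping in this last step---verifying that the halvings consume exactly $2n+1$ operations and that the single tripling step is counted once---since the remainder of the argument is the same routine recurrence-solving used in the earlier theorems, and the reliance on $N_{11}$ being a known finite quantity is precisely what ties this theorem to its predecessor in the inductive chain.
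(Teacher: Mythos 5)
Your proposal is correct and follows essentially the same route as the paper: identify $e_0=7$ from $\beta_4=22$, verify $3e_n+1=11(2^{2n+1})$, read off the recurrence and solve it for the closed form, and count $1+(2n+1)+N_{11}=N_{11}+2n+2$ steps by riding the trajectory down to the already-settled number $11$. Your explicit telescoping of the geometric sum is a slightly more detailed version of the paper's ``solve the non-homogeneous recurrence'' step, but the argument is the same.
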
 
\begin{proof}
\newpage
\tikzstyle{rect}=[draw,rectangle,fill=blue!20,text width=3em,text centered, minimum height=2em]
\tikzstyle{ellip}=[draw,ellipse,fill=white!20, minimum height=2em]
\tikzstyle{circ}=[draw, circle, fill=white!20, minimum width=2pt,inner sep=3pt]
\tikzstyle{diam}=[draw,diamond,fill=white!20,text width=8em, text badly centered, inner sep=0pt]
\tikzstyle{line}=[draw,-latex']
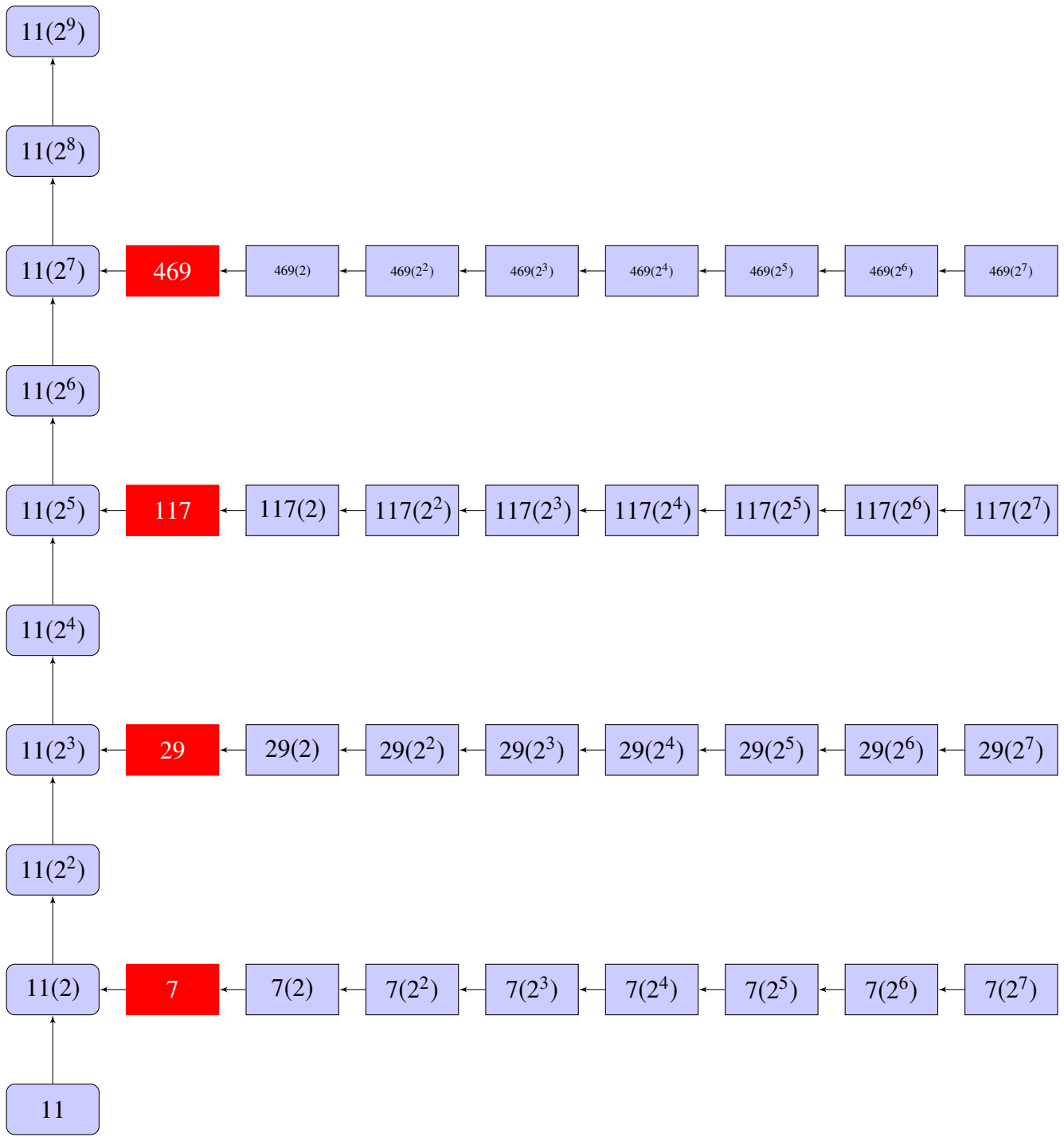
\begin{figure}[h!]
\begin{center}
\begin{tikzpicture}[node distance=1cm, auto]
\node [rect, rounded corners] (step1) {$11$};
\node [rect, rounded corners, above of=step1,node distance=2cm] (step2) {$11(2)$};
\node [rect, rounded corners, above of=step2,node distance=2cm] (step3) {$11(2^2)$};
\node [rect, rounded corners, above of=step3,node distance=2cm] (step4) {$11(2^3)$};
\node [rect, rounded corners, above of=step4,node distance=2cm] (step5) {$11(2^4)$};
\node [rect, rounded corners, above of=step5,node distance=2cm] (step6) {$11(2^5)$};
\node [rect, rounded corners, above of=step6,node distance=2cm] (step7) {$11(2^6)$};
\node [rect, rounded corners, above of=step7,node distance=2cm] (step8) {$11(2^7)$};
\node [rect, rounded corners, above of=step8,node distance=2cm] (step9) {$11(2^8)$};
\node [rect, rounded corners, above of=step9,node distance=2cm] (step10) {$11(2^9)$};
\node [rect, right of=step2,node distance=2cm,color=red] (step11) {\color{white}$7$};
\node [rect, right of=step11,node distance=2cm] (step12) {$7(2)$};
\node [rect, right of=step12,node distance=2cm] (step13) {$7(2^2)$};
\node [rect, right of=step13,node distance=2cm] (step14) {$7(2^3)$};
\node [rect, right of=step14,node distance=2cm] (step15) {$7(2^4)$};
\node [rect, right of=step15,node distance=2cm] (step16) {$7(2^5)$};
\node [rect, right of=step16,node distance=2cm] (step17) {$7(2^6)$};
\node [rect, right of=step17,node distance=2cm] (step18) {$7(2^7)$};
\node [rect, right of=step4,node distance=2cm,color=red] (step19) {\color{white}$29$};
\node [rect, right of=step19,node distance=2cm] (step20) {$29(2)$};
\node [rect, right of=step20,node distance=2cm] (step21) {$29(2^2)$};
\node [rect, right of=step21,node distance=2cm] (step22) {$29(2^3)$};
\node [rect, right of=step22,node distance=2cm] (step23) {$29(2^4)$};
\node [rect, right of=step23,node distance=2cm] (step24) {$29(2^5)$};
\node [rect, right of=step24,node distance=2cm] (step25) {$29(2^6)$};
\node [rect, right of=step25,node distance=2cm] (step26) {$29(2^7)$};

\node [rect, right of=step6,node distance=2cm,color=red] (step27) {\color{white}$117$};
\node [rect, right of=step27,node distance=2cm] (step28) {$117(2)$};
\node [rect, right of=step28,node distance=2cm] (step29) {$117(2^2)$};
\node [rect, right of=step29,node distance=2cm] (step30) {$117(2^3)$};
\node [rect, right of=step30,node distance=2cm] (step31) {$117(2^4)$};
\node [rect, right of=step31,node distance=2cm] (step32) {$117(2^5)$};
\node [rect, right of=step32,node distance=2cm] (step33) {$117(2^6)$};
\node [rect, right of=step33,node distance=2cm] (step34) {$117(2^7)$};

\node [rect, right of=step8,node distance=2cm,color=red] (step35) {\color{white} $469$};
\node [rect, right of=step35,node distance=2cm] (step36) {\tiny $469(2)$};
\node [rect, right of=step36,node distance=2cm] (step37) {\tiny $469(2^2)$};
\node [rect, right of=step37,node distance=2cm] (step38) {\tiny $469(2^3)$};
\node [rect, right of=step38,node distance=2cm] (step39) {\tiny $469(2^4)$};
\node [rect, right of=step39,node distance=2cm] (step40) {\tiny $469(2^5)$};
\node [rect, right of=step40,node distance=2cm] (step41) {\tiny $469(2^6)$};
\node [rect, right of=step41,node distance=2cm] (step42) {\tiny $469(2^7)$};

\path [line] (step1)--(step2);
\path [line] (step2)--(step3);
\path [line] (step3)--(step4);
\path [line] (step4)--(step5);
\path [line] (step5)--(step6);
\path [line] (step6)--(step7);
\path [line] (step7)--(step8);
\path [line] (step8)--(step9);
\path [line] (step9)--(step10);

\path [line] (step18)--(step17);
\path [line] (step17)--(step16);
\path [line] (step16)--(step15);
\path [line] (step15)--(step14);
\path [line] (step14)--(step13);
\path [line] (step13)--(step12);
\path [line] (step12)--(step11);
\path [line] (step11)--(step2);

\path [line] (step26)--(step25);
\path [line] (step25)--(step24);
\path [line] (step24)--(step23);
\path [line] (step23)--(step22);
\path [line] (step22)--(step21);
\path [line] (step21)--(step20);
\path [line] (step20)--(step19);
\path [line] (step19)--(step4);

\path [line] (step34)--(step33);
\path [line] (step33)--(step32);
\path [line] (step32)--(step31);
\path [line] (step31)--(step30);
\path [line] (step30)--(step29);
\path [line] (step29)--(step28);
\path [line] (step28)--(step27);
\path [line] (step27)--(step6);

\path [line] (step42)--(step41);
\path [line] (step41)--(step40);
\path [line] (step40)--(step39);
\path [line] (step39)--(step38);
\path [line] (step38)--(step37);
\path [line] (step37)--(step36);
\path [line] (step36)--(step35);
\path [line] (step35)--(step8);

\end{tikzpicture}
\caption{\small Proof of Collatz conjecture for the sequence of numbers $\{7,29,117,469\cdots \}$ combined with the previous figures.}

\end{center}
\end{figure}

\justify
As you can see from figure $5$, we have sequence of numbers $\{7,29,117,469\cdots \}$. All elements of this sequence are odd numbers with $e_0=7,e_1=29,e_2=117,e_3=469\cdots$, and $3(7)+1=11(2^1),3(29)+1=11(2^{3}),3(117)+1=11(2^5),3(469)+1=11(2^7),\cdots$. The question is how this sequence of numbers formed?. The answer is simple. You can observe that $29=7+11(2^1),117=29+11(2^3),469=117+11(2^5),\cdots$. Therefore the recurrence relation is $e_n=e_{n-1}+11(2^{2n-1})$ or $e_n-e_{n-1}=11(2^{2n-1})$ with $a_0=7$ and $n\in \mathbb{N}$.\\ Hence we can conclude that Collatz analysis is true for all numbers in the set
\begin{equation*}
 \{ e_n: e_n-e_{n-1}=11(2^{2n-1}),e_0=7\text{ and }n\in \mathbb{N}\}
\end{equation*}
If we solve this non-homogeneous recurrence relation we get,
\begin{equation*}
e_n=\frac{1}{3}\bigg[11(2^{2n+1})-1\bigg],\text{ }n\geq 0
\end{equation*} 
This is the general term for the above sequence of numbers. We might ask "How many number of steps we need to reach to $1$ for each number in the above sequence by applying Collatz function?". Here is the answer. For example for $n=0$, we have $e_0=7$, therefore according to Collatz conjecture $3(7)+1=22=11(2^1)$ since $e_0$ is odd. As you can see from the above figures that we need to have $16$ steps, which means $2+14$ steps, where $2=1+1$ is the exponent of $2$ plus $1$ from the equation on the right side of $3(e_0)+1=22=11(2^1)$ and $14$ is the number steps needed for $11$, so $e_0=7$ needs to have $16=2+14$ number of steps to reach to $1$ by applying Collatz  function repeatedly. Similarly for $e_1=29$ we need to have $14+4=18$ steps to reach to $1$ since $3(29)+1=11(2^{3})$, that is take the exponent of $2$ on the right hand side of $3(29)+1=11(2^3)$ and add $1$, that is, $3+1$ steps plus $14$ steps. Thus for $e_2=117$ we need to have $20$ steps since $3(117)+1=11(2^{5})$ and $e_3=469$ needs to have $22$ steps since $3(469)+1=11(2^7)$. You can check that this is true for the rest of numbers in the sequence by choosing randomly.
\end{proof}
Next, what I'm going to do is track another sequence function. The question is how to get this sequence function?. Well, the answer is simple if I draw another figure just like the previous but I have to use the previous figures to draw the next. First of all, for $k\in \mathbb{N}$, let me find the least $\beta_5=e_k2^{2n} \text{ or }=e_k2^{2n-1}\text{ for }n=1$ such that $\beta_5-1$ should be divisible by $3$, where $e_k \in \{ e_n: e_n-e_{n-1}=11(2^{2n-1}),e_0=7\text{ and }n\in \mathbb{N}\}$ and $(\beta_5-1)/3$ will be the initial term for the general term of the next sequence of numbers. So after two or three simple trials you can get that $e_k=7$ and $\beta_5=7(2^2)=28$ which is the least among others and you can verify that $\beta_5-1=28-1=27$ which is divisible by $3$ and $(\beta_5-1)/3=9=e_0$ is the initial term for the next sequence function. Now let's see the following figure $6$ to be more clear for the next and the previous work. 
\begin{theorem}
Collatz conjecture is true for all odd numbers in the set of the recurrence relation 
\begin{equation*}
 \{ f_n: f_n-f_{n-1}=7(2^{2n}),f_0=9\text{ and }n\in \mathbb{N}\}
\end{equation*}
\begin{equation*}
\Rightarrow f_n=\frac{1}{3}\bigg[7(2^{2n+2})-1\bigg],\text{ }n\geq 0
\end{equation*}    and for each $f_{n}$, there are $N_{7}+2n+3$ number of steps (or Collatz function operation) needed to get $1$.   
\end{theorem}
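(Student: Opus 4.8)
The plan is to reproduce, for the sequence $\{9,37,149,597,\ldots\}$, the same four-part template that proved Theorems~1 through~5. I would first assemble Figure~6, placing the column headed by $f_0=9$ next to the doubling columns $f_k(2),f_k(2^2),\ldots$ and connecting it back into the earlier figures through the node $7=e_0$, which is precisely the number from which $9$ was generated when $\beta_5=7(2^2)=28$ was selected. The figure serves to record two facts for each term: every $f_n$ is odd, and a single application of the Collatz map gives $3f_n+1=7(2^{2n+2})$, which I would check on the opening terms, e.g. $3(9)+1=28=7(2^2)$ and $3(37)+1=112=7(2^4)$.

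Next I would extract the recurrence from the successive differences $37-9=7(2^2)$, $149-37=7(2^4)$, $597-149=7(2^6)$, giving $f_n-f_{n-1}=7(2^{2n})$ with $f_0=9$. Solving this first-order non-homogeneous recurrence by telescoping, $f_n=9+\sum_{j=1}^{n}7\cdot 4^{j}$, and summing the geometric series yields the closed form $f_n=\frac{1}{3}\bigl[7(2^{2n+2})-1\bigr]$. Substituting back confirms the identity $3f_n+1=7(2^{2n+2})$, which is exactly what the step count relies on.

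Finally I would count the Collatz operations for a general $f_n$. Because $f_n$ is odd, one step carries it to $7(2^{2n+2})$; thereafter $2n+2$ halvings bring it down to $7$, so reaching $7$ costs $1+(2n+2)=2n+3$ steps. From $7$, by the definition of $N_7$, exactly $N_7$ further operations are needed to arrive at $1$. Adding these gives the stated total $N_7+2n+3$. I would verify the base case $f_0=9$ explicitly: the chain $9\to 28\to 14\to 7$ takes $3$ steps, after which $N_7$ more are required, matching $N_7+3$.

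The principal obstacle, just as in the earlier theorems, is not the algebra of the recurrence but the justification that the descent from $7(2^{2n+2})$ really terminates at the specific number $7$ whose step count $N_7$ has already been fixed --- in other words, that this new sequence is correctly grafted onto the previous one at the node $7=e_0$. Once that linkage is accepted (it is built into the choice of $\beta_5$ and $f_0=9$), the halving count and the known value $N_7$ combine at once to give the step formula, and the solved recurrence supplies the closed form, completing the proof.
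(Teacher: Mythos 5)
Your proposal follows essentially the same route as the paper's own argument: the figure grafted onto the earlier ones at the node $7=e_0$, the recurrence $f_n-f_{n-1}=7(2^{2n})$ read off from successive differences, the closed form $f_n=\frac{1}{3}\bigl[7(2^{2n+2})-1\bigr]$, and the step count $N_7+2n+3$ obtained as one odd step plus $2n+2$ halvings down to $7$ (the paper phrases this as $3+16$ for $f_0=9$ with $N_7=16$). You also correctly identify the same load-bearing assumption the paper makes, namely that the descent terminates at the already-settled node $7$.
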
 
\begin{proof}
\newpage
\tikzstyle{rect}=[draw,rectangle,fill=blue!20,text width=3em,text centered, minimum height=2em]
\tikzstyle{ellip}=[draw,ellipse,fill=white!20, minimum height=2em]
\tikzstyle{circ}=[draw, circle, fill=white!20, minimum width=2pt,inner sep=3pt]
\tikzstyle{diam}=[draw,diamond,fill=white!20,text width=8em, text badly centered, inner sep=0pt]
\tikzstyle{line}=[draw,-latex']
\begin{figure}[h!]
\begin{center}
\begin{tikzpicture}[node distance=1cm, auto]
\node [rect, rounded corners] (step1) {$7$};
\node [rect, rounded corners, above of=step1,node distance=2cm] (step2) {$7(2)$};
\node [rect, rounded corners, above of=step2,node distance=2cm] (step3) {$7(2^2)$};
\node [rect, rounded corners, above of=step3,node distance=2cm] (step4) {$7(2^3)$};
\node [rect, rounded corners, above of=step4,node distance=2cm] (step5) {$7(2^4)$};
\node [rect, rounded corners, above of=step5,node distance=2cm] (step6) {$7(2^5)$};
\node [rect, rounded corners, above of=step6,node distance=2cm] (step7) {$7(2^6)$};
\node [rect, rounded corners, above of=step7,node distance=2cm] (step8) {$7(2^7)$};
\node [rect, rounded corners, above of=step8,node distance=2cm] (step9) {$7(2^8)$};
\node [rect, rounded corners, above of=step9,node distance=2cm] (step10) {$7(2^9)$};
\node [rect, right of=step3,node distance=2cm,color=red] (step11) {\color{white}$9$};
\node [rect, right of=step11,node distance=2cm] (step12) {$9(2)$};
\node [rect, right of=step12,node distance=2cm] (step13) {$9(2^2)$};
\node [rect, right of=step13,node distance=2cm] (step14) {$9(2^3)$};
\node [rect, right of=step14,node distance=2cm] (step15) {$9(2^4)$};
\node [rect, right of=step15,node distance=2cm] (step16) {$9(2^5)$};
\node [rect, right of=step16,node distance=2cm] (step17) {$9(2^6)$};
\node [rect, right of=step17,node distance=2cm] (step18) {$9(2^7)$};

\node [rect, right of=step5,node distance=2cm,color=red] (step19) {\color{white}$37$};
\node [rect, right of=step19,node distance=2cm] (step20) {$37(2)$};
\node [rect, right of=step20,node distance=2cm] (step21) {$37(2^2)$};
\node [rect, right of=step21,node distance=2cm] (step22) {$37(2^3)$};
\node [rect, right of=step22,node distance=2cm] (step23) {$37(2^4)$};
\node [rect, right of=step23,node distance=2cm] (step24) {$37(2^5)$};
\node [rect, right of=step24,node distance=2cm] (step25) {$37(2^6)$};
\node [rect, right of=step25,node distance=2cm] (step26) {$37(2^7)$};

\node [rect, right of=step7,node distance=2cm,color=red] (step27) {\color{white}$149$};
\node [rect, right of=step27,node distance=2cm] (step28) {$149(2)$};
\node [rect, right of=step28,node distance=2cm] (step29) {$149(2^2)$};
\node [rect, right of=step29,node distance=2cm] (step30) {$149(2^3)$};
\node [rect, right of=step30,node distance=2cm] (step31) {$149(2^4)$};
\node [rect, right of=step31,node distance=2cm] (step32) {$149(2^5)$};
\node [rect, right of=step32,node distance=2cm] (step33) {$149(2^6)$};
\node [rect, right of=step33,node distance=2cm] (step34) {$149(2^7)$};

\node [rect, right of=step9,node distance=2cm,color=red] (step35) {\color{white}$597$};
\node [rect, right of=step35,node distance=2cm] (step36) {\tiny $597(2)$};
\node [rect, right of=step36,node distance=2cm] (step37) {\tiny $597(2^2)$};
\node [rect, right of=step37,node distance=2cm] (step38) {\tiny $597(2^3)$};
\node [rect, right of=step38,node distance=2cm] (step39) {\tiny $597(2^4)$};
\node [rect, right of=step39,node distance=2cm] (step40) {\tiny $597(2^5)$};
\node [rect, right of=step40,node distance=2cm] (step41) {\tiny $597(2^6)$};
\node [rect, right of=step41,node distance=2cm] (step42) {\tiny $597(2^7)$};

\path [line] (step1)--(step2);
\path [line] (step2)--(step3);
\path [line] (step3)--(step4);
\path [line] (step4)--(step5);
\path [line] (step5)--(step6);
\path [line] (step6)--(step7);
\path [line] (step7)--(step8);
\path [line] (step8)--(step9);
\path [line] (step9)--(step10);

\path [line] (step18)--(step17);
\path [line] (step17)--(step16);
\path [line] (step16)--(step15);
\path [line] (step15)--(step14);
\path [line] (step14)--(step13);
\path [line] (step13)--(step12);
\path [line] (step12)--(step11);
\path [line] (step11)--(step3);

\path [line] (step26)--(step25);
\path [line] (step25)--(step24);
\path [line] (step24)--(step23);
\path [line] (step23)--(step22);
\path [line] (step22)--(step21);
\path [line] (step21)--(step20);
\path [line] (step20)--(step19);
\path [line] (step19)--(step5);

\path [line] (step34)--(step33);
\path [line] (step33)--(step32);
\path [line] (step32)--(step31);
\path [line] (step31)--(step30);
\path [line] (step30)--(step29);
\path [line] (step29)--(step28);
\path [line] (step28)--(step27);
\path [line] (step27)--(step7);

\path [line] (step42)--(step41);
\path [line] (step41)--(step40);
\path [line] (step40)--(step39);
\path [line] (step39)--(step38);
\path [line] (step38)--(step37);
\path [line] (step37)--(step36);
\path [line] (step36)--(step35);
\path [line] (step35)--(step9);

\end{tikzpicture}
\caption{\small Proof of Collatz conjecture for the sequence of numbers $\{9,37,149,597\cdots \}$ combined with the previous figures.}

\end{center}
\end{figure}
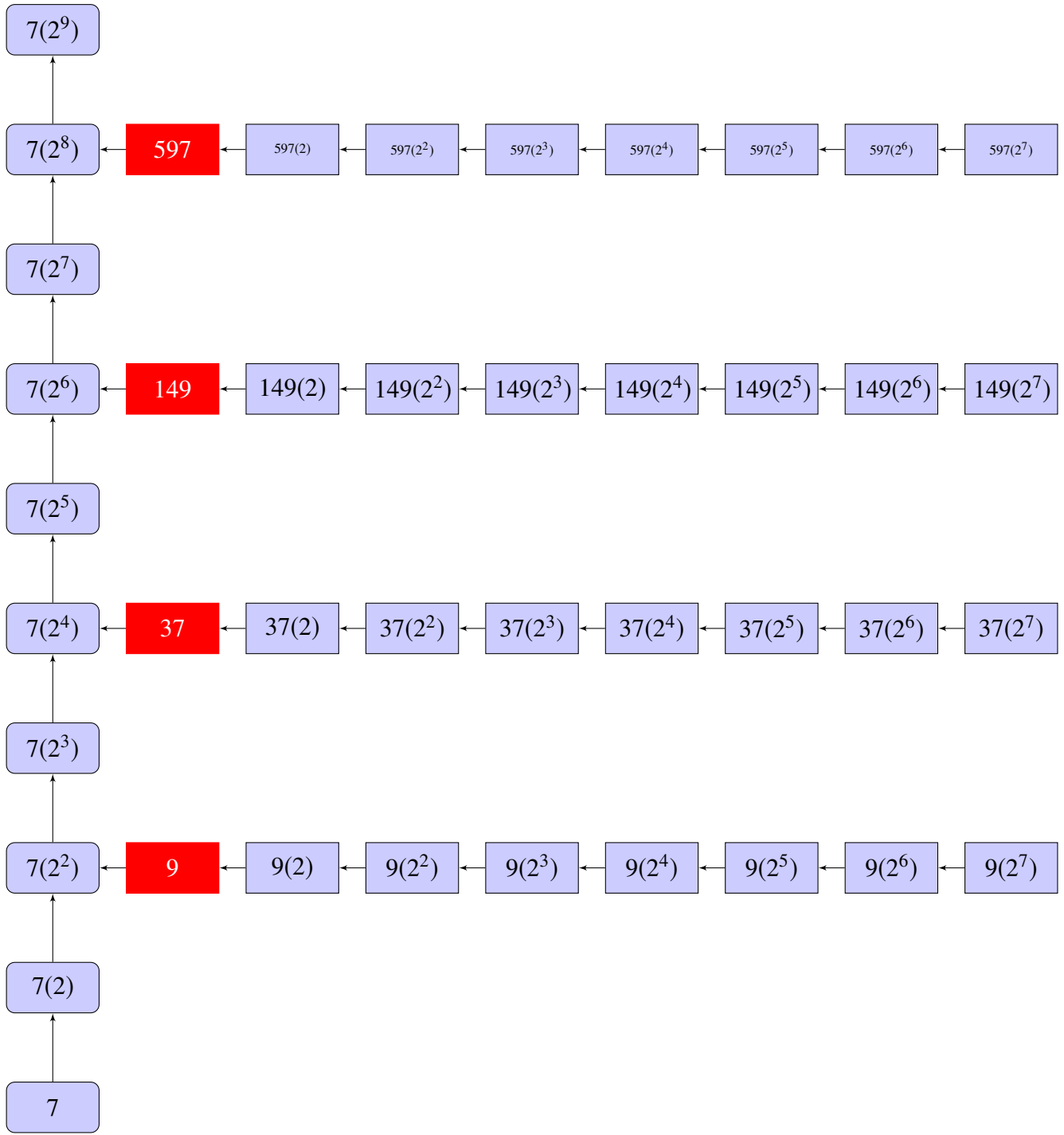

\justify
As you can see from figure $6$, we have sequence of numbers $\{9,37,149,597\cdots \}$. All elements of this sequence are odd numbers with $f_0=9,f_1=37,f_2=149,f_3=597\cdots$, and $3(9)+1=7(2^2),3(37)+1=7(2^{4}),3(149)+1=7(2^6),3(597)+1=11(2^8),\cdots$. The question is how this sequence of numbers formed?. The answer is simple. You can observe that $37=9+7(2^2),149=37+7(2^4),597=149+7(2^6),\cdots$. Therefore the recurrence relation is $f_n=f_{n-1}+7(2^{2n})$ or $f_n-f_{n-1}=7(2^{2n})$ with $f_0=9$ and $n\in \mathbb{N}$.\\ Hence we can conclude that collatz analysis is true for all numbers in the set
\begin{equation*}
 \{ f_n: f_n-f_{n-1}=7(2^{2n}),f_0=9\text{ and }n\in \mathbb{N}\}
\end{equation*}
If we solve this non-homogeneous recurrence relation we get,
\begin{equation*}
f_n=\frac{1}{3}\bigg[7(2^{2n+2})-1\bigg],\text{ }n\geq 0
\end{equation*} 
This is the general term for the above sequence of numbers. We might ask "How many number of steps we need to reach to $1$ for each number in the above sequence by applying Collatz function?". Here is the answer. For example for $n=0$, we have $f_0=9$, therefore according to Collatz conjecture $3(9)+1=28=7(2^2)$ since $f_0$ is odd. As you can see from the above figure that we need to have $19$ steps, which means $3+16$ steps, where $3=2+1$ is the exponent of $2$ plus $1$ from the equation on the right side of $3(f_0)+1=28=7(2^2)$ and $16$ is the number steps needed for $7$, so $f_0=9$ needs to have $19=3+16$ number of steps to reach to $1$ by applying Collatz  function repeatedly. Similarly for $f_1=37$ we need to have $16+5=21$ steps to reach to $1$ since $3(37)+1=7(2^{4})$, that is take the exponent of $2$ on the right hand side of $3(37)+1=7(2^4)$ and add $1$, that is, $4+1$ steps plus $16$ steps. Thus for $f_2=149$ we need to have $23$ steps since $3(149)+1=7(2^{6})$ and $f_3=597$ needs to have $25$ steps since $3(597)+1=7(2^8)$. You can check that this is true for the rest of numbers in the sequence by choosing randomly.
\end{proof}
Next, what I'm going to do is track another sequence function. The question is how to get this sequence function?. Well, the answer is simple if I draw another figure just like the previous but I have to use the previous pictures to draw the next. First of all, for $k\in \mathbb{N}$, let me find the least $\beta_6=f_k2^{2n} \text{ or }=f_k2^{2n-1}\text{ for }n=1$ such that $\beta_6-1$ should be divisible by $3$, where $f_k \in \{ f_n: f_n-f_{n-1}=11(2^{2n-1}),f_0=7\text{ and }n\in \mathbb{N}\}$ and $(\beta_6-1)/3$ will be the initial term for the general term of the next sequence of numbers. So after two or three simple trials you can get that $f_k=29$ and $\beta_6=29(2^1)=58$ which is the least among others and you can verify that $\beta_6-1=58-1=57$ which is divisible by $3$ and $(\beta_6-1)/3=19=f_0$ is the initial term for the next sequence function. Now let's see the following figure $7$ to be more clear for the next and the previous work. 
\begin{theorem}
Collatz conjecture is true for all odd numbers in the set of the recurrence relation 
\begin{equation*}
 \{ g_n: g_n-g_{n-1}=29(2^{2n-1}),g_0=19\text{ and }n\in \mathbb{N}\}
\end{equation*}
\begin{equation*}
g_n=\frac{1}{3}\bigg[29(2^{2n+1})-1\bigg],\text{ }n\geq 0
\end{equation*}    and for each $g_{n}$, there are $N_{29}+2n+2$ number of steps (or Collatz function operation) needed to get $1$.   
\end{theorem}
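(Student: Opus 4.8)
The plan is to mirror the construction used for Theorems~2 through~6, drawing the figure for the sequence $\{19,77,309,1237,\cdots\}$ on top of the earlier ones and tying each $g_n$ back to the entry $29$, which already appears as $e_1$ in the sequence of Theorem~5. First I would record the identity that defines this family, namely that tripling any $g_n$ and adding one produces a pure power-of-two multiple of $29$:
\begin{equation*}
3g_n+1=29\left(2^{2n+1}\right),\qquad n\geq 0.
\end{equation*}
The first cases $3(19)+1=58=29(2^{1})$, $3(77)+1=232=29(2^{3})$, and $3(309)+1=928=29(2^{5})$ confirm the pattern, while the differences $77-19=29(2)$, $309-77=29(2^{3})$, and $1237-309=29(2^{5})$ reproduce the same arithmetic layout seen in the previous figures.

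Next I would verify the closed form by solving the recurrence. The relation $g_n-g_{n-1}=29(2^{2n-1})$ with $g_0=19$ is a first-order linear nonhomogeneous recurrence, so telescoping its differences gives
\begin{equation*}
g_n=19+\sum_{j=1}^{n}29\left(2^{2j-1}\right)=19+\frac{29}{2}\sum_{j=1}^{n}4^{j}.
\end{equation*}
Summing the geometric series and simplifying collapses this to $g_n=\tfrac{1}{3}\left[29(2^{2n+1})-1\right]$, matching the stated formula. A quick parity check, $3g_n=29(2^{2n+1})-1$ being odd, confirms that every $g_n$ is odd, so the odd branch of the Collatz map applies at each starting value.

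For the step count I would trace the orbit explicitly. Since $g_n$ is odd, one application of the map sends $g_n\mapsto 3g_n+1=29(2^{2n+1})$, and this is followed by $2n+1$ successive halvings passing through $29(2^{2n}),\dots,29(2),29$; hence exactly $2n+2$ operations are needed to reach $29$. By Theorem~5 the number $29$ is known to reach $1$, in $N_{29}$ steps, and concatenating the two stages yields the claimed total of $N_{29}+2n+2$.

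The main obstacle is not any single computation but the soundness of the inductive chain on which the whole section rests: the count $N_{29}+2n+2$ is meaningful only because $N_{29}$ is finite, and that finiteness was secured precisely when $29$ was captured as $e_1$ in Theorem~5. The real content, therefore, is to confirm that each \emph{base} odd number feeding a new recurrence has already been shown to reach $1$ by an earlier theorem, so that the families chain together without gaps; the explicit orbit above then makes the passage from $g_n$ down to that base number automatic.
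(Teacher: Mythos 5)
Your proposal matches the paper's own argument for this theorem: the paper likewise records that $3g_n+1=29(2^{2n+1})$, solves the nonhomogeneous recurrence to obtain $g_n=\frac{1}{3}\left[29(2^{2n+1})-1\right]$, and counts one tripling step plus $2n+1$ halvings (that is, $2n+2$ operations) down to $29$ before invoking the previously established value $N_{29}$ coming from $29=e_1$ in Theorem~5. Your write-up is essentially the same route, only more explicit than the paper's figure-plus-verbal-check presentation, in particular in isolating the fact that the whole claim rests on $N_{29}$ being finite via the earlier theorem.
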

\begin{proof}
\newpage
\tikzstyle{rect}=[draw,rectangle,fill=blue!20,text width=3em,text centered, minimum height=2em]
\tikzstyle{ellip}=[draw,ellipse,fill=white!20, minimum height=2em]
\tikzstyle{circ}=[draw, circle, fill=white!20, minimum width=2pt,inner sep=3pt]
\tikzstyle{diam}=[draw,diamond,fill=white!20,text width=8em, text badly centered, inner sep=0pt]
\tikzstyle{line}=[draw,-latex']
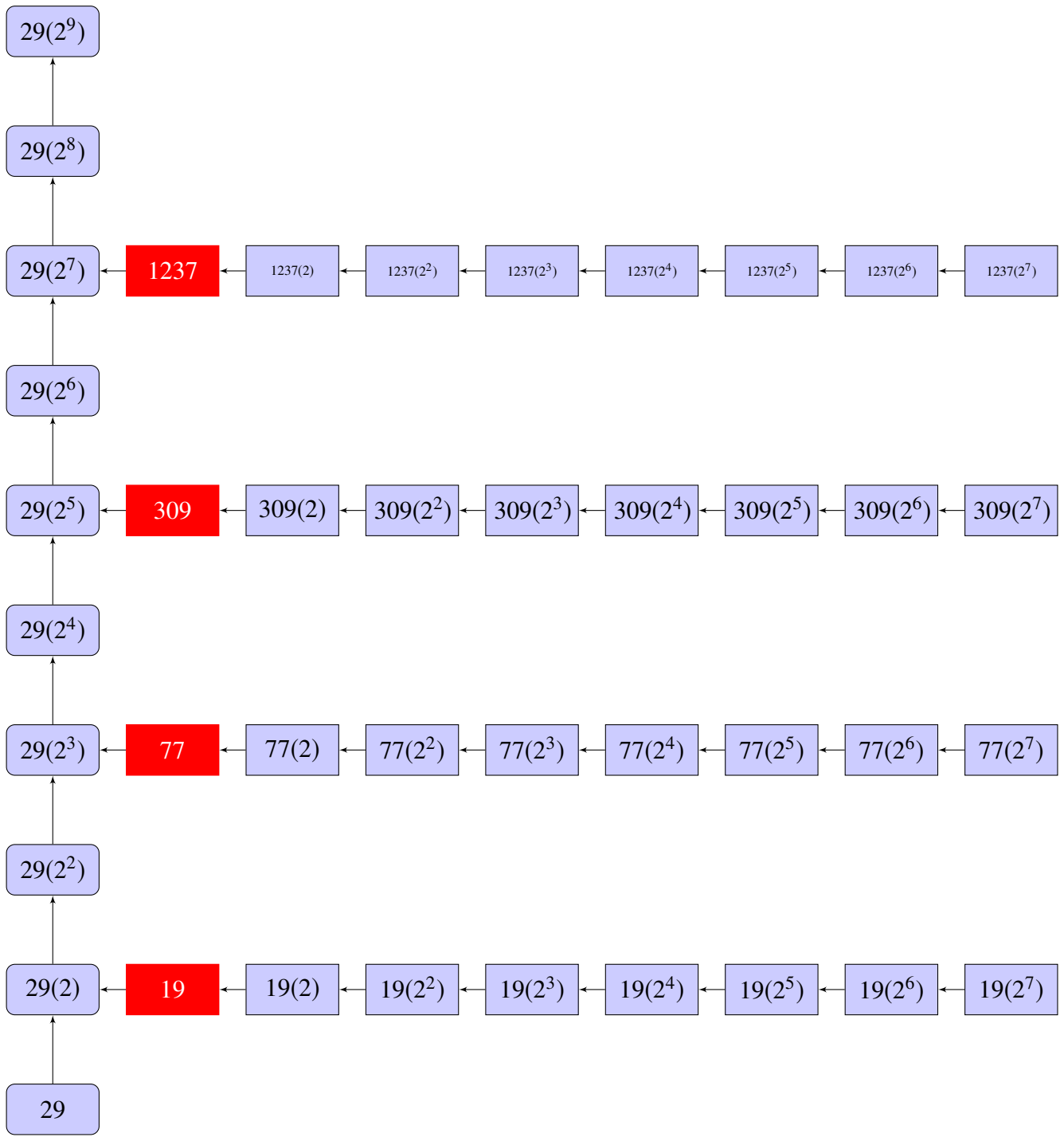
\begin{figure}[h!]
\begin{center}
\begin{tikzpicture}[node distance=1cm, auto]
\node [rect, rounded corners] (step1) {$29$};
\node [rect, rounded corners, above of=step1,node distance=2cm] (step2) {$29(2)$};
\node [rect, rounded corners, above of=step2,node distance=2cm] (step3) {$29(2^2)$};
\node [rect, rounded corners, above of=step3,node distance=2cm] (step4) {$29(2^3)$};
\node [rect, rounded corners, above of=step4,node distance=2cm] (step5) {$29(2^4)$};
\node [rect, rounded corners, above of=step5,node distance=2cm] (step6) {$29(2^5)$};
\node [rect, rounded corners, above of=step6,node distance=2cm] (step7) {$29(2^6)$};
\node [rect, rounded corners, above of=step7,node distance=2cm] (step8) {$29(2^7)$};
\node [rect, rounded corners, above of=step8,node distance=2cm] (step9) {$29(2^8)$};
\node [rect, rounded corners, above of=step9,node distance=2cm] (step10) {$29(2^9)$};
\node [rect, right of=step2,node distance=2cm,color=red] (step11) {\color{white}$19$};
\node [rect, right of=step11,node distance=2cm] (step12) {$19(2)$};
\node [rect, right of=step12,node distance=2cm] (step13) {$19(2^2)$};
\node [rect, right of=step13,node distance=2cm] (step14) {$19(2^3)$};
\node [rect, right of=step14,node distance=2cm] (step15) {$19(2^4)$};
\node [rect, right of=step15,node distance=2cm] (step16) {$19(2^5)$};
\node [rect, right of=step16,node distance=2cm] (step17) {$19(2^6)$};
\node [rect, right of=step17,node distance=2cm] (step18) {$19(2^7)$};
\node [rect, right of=step4,node distance=2cm,color=red] (step19) {\color{white}$77$};
\node [rect, right of=step19,node distance=2cm] (step20) {$77(2)$};
\node [rect, right of=step20,node distance=2cm] (step21) {$77(2^2)$};
\node [rect, right of=step21,node distance=2cm] (step22) {$77(2^3)$};
\node [rect, right of=step22,node distance=2cm] (step23) {$77(2^4)$};
\node [rect, right of=step23,node distance=2cm] (step24) {$77(2^5)$};
\node [rect, right of=step24,node distance=2cm] (step25) {$77(2^6)$};
\node [rect, right of=step25,node distance=2cm] (step26) {$77(2^7)$};

\node [rect, right of=step6,node distance=2cm,color=red] (step27) {\color{white}$309$};
\node [rect, right of=step27,node distance=2cm] (step28) {$309(2)$};
\node [rect, right of=step28,node distance=2cm] (step29) {$309(2^2)$};
\node [rect, right of=step29,node distance=2cm] (step30) {$309(2^3)$};
\node [rect, right of=step30,node distance=2cm] (step31) {$309(2^4)$};
\node [rect, right of=step31,node distance=2cm] (step32) {$309(2^5)$};
\node [rect, right of=step32,node distance=2cm] (step33) {$309(2^6)$};
\node [rect, right of=step33,node distance=2cm] (step34) {$309(2^7)$};

\node [rect, right of=step8,node distance=2cm,color=red] (step35) {\color{white}$1237$};
\node [rect, right of=step35,node distance=2cm] (step36) {\tiny $1237(2)$};
\node [rect, right of=step36,node distance=2cm] (step37) {\tiny $1237(2^2)$};
\node [rect, right of=step37,node distance=2cm] (step38) {\tiny $1237(2^3)$};
\node [rect, right of=step38,node distance=2cm] (step39) {\tiny $1237(2^4)$};
\node [rect, right of=step39,node distance=2cm] (step40) {\tiny $1237(2^5)$};
\node [rect, right of=step40,node distance=2cm] (step41) {\tiny $1237(2^6)$};
\node [rect, right of=step41,node distance=2cm] (step42) {\tiny $1237(2^7)$};

\path [line] (step1)--(step2);
\path [line] (step2)--(step3);
\path [line] (step3)--(step4);
\path [line] (step4)--(step5);
\path [line] (step5)--(step6);
\path [line] (step6)--(step7);
\path [line] (step7)--(step8);
\path [line] (step8)--(step9);
\path [line] (step9)--(step10);

\path [line] (step18)--(step17);
\path [line] (step17)--(step16);
\path [line] (step16)--(step15);
\path [line] (step15)--(step14);
\path [line] (step14)--(step13);
\path [line] (step13)--(step12);
\path [line] (step12)--(step11);
\path [line] (step11)--(step2);

\path [line] (step26)--(step25);
\path [line] (step25)--(step24);
\path [line] (step24)--(step23);
\path [line] (step23)--(step22);
\path [line] (step22)--(step21);
\path [line] (step21)--(step20);
\path [line] (step20)--(step19);
\path [line] (step19)--(step4);

\path [line] (step34)--(step33);
\path [line] (step33)--(step32);
\path [line] (step32)--(step31);
\path [line] (step31)--(step30);
\path [line] (step30)--(step29);
\path [line] (step29)--(step28);
\path [line] (step28)--(step27);
\path [line] (step27)--(step6);

\path [line] (step42)--(step41);
\path [line] (step41)--(step40);
\path [line] (step40)--(step39);
\path [line] (step39)--(step38);
\path [line] (step38)--(step37);
\path [line] (step37)--(step36);
\path [line] (step36)--(step35);
\path [line] (step35)--(step8);

\end{tikzpicture}
\caption{\small Proof of Collatz conjecture for the sequence of numbers $\{19,77,309,1237\cdots \}$ combined with the previous figures.}

\end{center}
\end{figure}

\justify
As you see from the picture we have sequence of numbers $\{19,77,309,1237\cdots \}$. All elements of this sequence are odd numbers with $g_0=19,g_1=77,g_2=309,g_3=1237\cdots$, and $3(19)+1=29(2^1),3(77)+1=29(2^{3}),3(309)+1=29(2^5),3(1237)+1=29(2^7),\cdots$. The question is how this sequence of numbers formed?. The answer is simple. You can observe that $77=19+29(2^1),309=77+29(2^3),1237=309+29(2^5),\cdots$. Therefore the general term is $g_n=g_{n-1}+29(2^{2n-1})$ or $g_n-g_{n-1}=29(2^{2n-1})$ with $g_0=19$ and $n\in \mathbb{N}$.\\ Hence we can conclude that Collatz analysis is true for all numbers in the set
\begin{equation*}
 \{ g_n: g_n-g_{n-1}=29(2^{2n-1}),g_0=19\text{ and }n\in \mathbb{N}\}
\end{equation*}
If we solve this non-homogeneous recurrence relation we get,
\begin{equation*}
g_n=\frac{1}{3}\bigg[29(2^{2n+1})-1\bigg],\text{ }n\geq 0
\end{equation*} 
This is the general term for the above sequence of numbers. We might ask "How many number of steps we need to reach to $1$ for each number in the above sequence by applying Collatz function?". Here is the answer. For example for $n=0$, we have $g_0=19$, therefore according to Collatz conjecture $3(19)+1=58=29(2^1)$ since $g_0$ is odd. As you can see from the above figures that we need to have $20$ steps, which means $2+18$ steps, where $2=1+1$ is the exponent of $2$ plus $1$ from the equation on the right side of $3(g_0)+1=58=29(2^1)$ and $18$ is the number steps needed for $29$, so $g_0=19$ needs to have $20=2+18$ number of steps to reach to $1$ by applying Collatz function repeatedly. Similarly for $g_1=77$ we need to have $18+4=22$ steps to reach to $1$ since $3(77)+1=29(2^{3})$, that is take the exponent of $2$ on the right hand side of $3(77)+1=29(2^3)$ and add $1$, that is, $3+1$ steps plus $18$ steps. Thus for $g_2=309$ we need to have $24$ steps since $3(309)+1=29(2^{5})$ and $g_3=1237$ needs to have $26$ steps since $3(1237)+1=29(2^7)$. You can check that this is true for the rest of numbers in the sequence by choosing randomly.
\end{proof}
\sectionn{Proof of Collatz conjecture for all positive integers }
\label{sec:7}

{ \fontfamily{times}\selectfont
 \noindent}
 \justify
I have proved Collatz conjecture for numbers in the recurrence relations:
\begin{equation*}
a_n-a_{n-1}=2^{2n}\text{ ; }a_0=1
\end{equation*} 

\begin{equation*}
b_n-b_{n-1}=5(2^{2n-1})\text{ ; }b_0=3
\end{equation*}

\begin{equation*}
c_n-c_{n-1}=13(2^{2n-1})\text{ ; }c_0=17
\end{equation*}

\begin{equation*}
d_n-d_{n-1}=17(2^{2n-1})\text{ ; }d_0=11
\end{equation*}

\begin{equation*}
e_n-e_{n-1}=11(2^{2n-1})\text{ ; }e_0=7
\end{equation*}

\begin{equation*}
f_n-f_{n-1}=7(2^{2n})\text{ ; }f_0=9
\end{equation*}

\begin{equation*}
g_n-g_{n-1}=29(2^{2n-1})\text{ ; }g_0=19
\end{equation*}
Hence we can propose the following recurrence relations such that Collatz conjecture is true for all $k\in \mathbb{Z+}$:

\begin{equation}
D_n-D_{n-1}=(3k-1)2^{2n-1}\text{ ; }D_0=2k-1
\end{equation}

\begin{equation*}
\Rightarrow D_n=\frac{1}{3}(2^{2n+1}(3k-1)-1)
\end{equation*}

\begin{equation}
J_n-J_{n-1}=(3k+2)2^{2n-1}\text{ ; }J_0=2k+1
\end{equation}

\begin{equation*}
\Rightarrow J_n=\frac{1}{3}(2^{2n+1}(3k+2)-1)
\end{equation*}

\begin{equation}
M_n-M_{n-1}=(6k-1)2^{2n-1}\text{ ; }M_0=4k-1
\end{equation}

\begin{equation*}
\Rightarrow M_n=\frac{1}{3}(2^{2n+1}(6k-1)-1)
\end{equation*}

\begin{equation}
K_{n}-K_{n-1}=(12k-1)2^{2n-1}\text{ ; }K_0=8k-1
\end{equation}

\begin{equation*}
\Rightarrow K_n=\frac{1}{3}(2^{2n+1}(12k-1)-1)
\end{equation*}

\begin{equation}
S_n-S_{n-1}=(3k+1)2^{2n}\text{ ; }S_0=4k+1
\end{equation}

\begin{equation*}
\Rightarrow S_n=\frac{1}{3}(2^{2n+2}(3k+1)-1)
\end{equation*}

You can easily understand that one recurrence relation is depends on the previous recurrence relation, that is to formulate the next recurrence relation formulae we have to use numbers in the previous recurrence relation formulas. Actually we have discussed this in detail earlier.\\  
Here we have to ask a big question "Is it possible to generalize the above recurrence relations for all $\alpha_0=2k-1=n$ such that Collatz analysis is true for all $k \in \mathbb{N}$, that is, the existence of $\alpha_0=n=2k-1$ for all $k \in \mathbb{N}$ where Collatz conjecture is true". If we can show the existence of $n=2k-1$ for all $k \in \mathbb{N}$ and if we generalize the above recurrence relations that we have discussed above by one recurrence relation formula for all odd natural numbers $\alpha_0=n$, then Collatz conjecture becomes Collatz Theorem. Let's get started from the proof of the Theorem that shows the existence of $n=2k-1=(\beta-1)/3, \text{ }k\in \mathbb{N}$, where $\beta=2\alpha_k$ or $\beta=4\alpha_k$ such that $\beta-1$ is multiple of $3$ or divisible by $3$ as we have discussed before.
\begin{theorem}
If we can find the sequence $\alpha_k$ or $2\alpha_k$ for all $k \in \mathbb{N}$ where $\beta=2\alpha_k$ or $\beta=4\alpha_k$ such that $\beta-1$ is a multiple of $3$ or divisible by $3$, then $n=2k-1$ existed for all $k \in \mathbb{N}$. Where 
$\alpha_k \in \{ \alpha_n:\alpha_n-\alpha_{n-1}=(3s-1)2^{2n-1},\alpha_0=2s-1\text{ and }n,s\in \mathbb{N}\text{ and }s \text{ is fixed}\}$ and $(\beta-1)/3$ will be the initial term for the general term of the next sequence of numbers, that is, if $(\beta-1)/3=2k-1$ and $\alpha_k=3k-1$ then we can form
\begin{equation*}
D_n-D_{n-1}=(3k-1)2^{2n-1}\text{ ; }D_0=2k-1
\end{equation*}
\begin{equation*}
\Rightarrow D_n=\frac{1}{3}(2^{2n+1}(3k-1)-1)
\end{equation*}
\end{theorem}

\begin{proof}
\begin{itemize}
\item {\bf Case I:} When $\beta=2\alpha_k$,\\
Suppose $n=2k-1=(\beta-1)/3 \text { for all }k\in \mathbb{N}$
\begin{equation*}
\Longrightarrow 2k-1=\frac{2\alpha_k-1}{3}
\end{equation*}
\begin{equation*}
\Longrightarrow 2k=\frac{2\alpha_k-1}{3}+1
\end{equation*}

\begin{equation*}
\Longrightarrow \alpha_k=3k-1
\end{equation*}
Therfore $\beta=2\alpha_k=2(3k-1)=6k-2 \Longrightarrow \alpha_0=(\beta-1)/3=2k-1$ 
\item {\bf Case II:} When $\beta=4\alpha_k$,\\
Suppose $n=2k-1=(\beta-1)/3 \text { for all }k\in \mathbb{N}$
\begin{equation*}
\Longrightarrow 2k-1=\frac{4\alpha_k-1}{3}
\end{equation*}
\begin{equation*}
\Longrightarrow 2k=\frac{4\alpha_k-1}{3}+1
\end{equation*}

\begin{equation*}
\Longrightarrow 2\alpha_k=3k-1
\end{equation*}
Therfore $\beta=4\alpha_k=2(3k-1)=6k-2 \Longrightarrow \alpha_0=(\beta-1)/3=2k-1$\\[2mm]
We can also suppose $n=4k+1=(\beta-1)/3 \text { for all }k\in \mathbb{N}$
\begin{equation*}
\Longrightarrow 4k+1=\frac{4\alpha_k-1}{3}
\end{equation*}
\begin{equation*}
\Longrightarrow 4k=\frac{4\alpha_k-1}{3}-1
\end{equation*}

\begin{equation*}
\Longrightarrow \alpha_k=3k+1
\end{equation*}
Therfore $\beta=4\alpha_k=4(3k+1)=12k+4 \Longrightarrow \alpha_0=(\beta-1)/3=4k+1$
\end{itemize}

\end{proof}

\begin{theorem}
For all $n=2k-1\text{, }k \in \mathbb{N}$, Using collatz analysis function, we can form the sequence function that generalizes the above recurrence relations, that is,
\begin{equation*}
a_m=\frac{1}{3}\bigg[ 4^m(3n+1)-1\bigg]\text{, }m\geq 0
\end{equation*}
\end{theorem}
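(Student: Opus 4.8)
The plan is to treat the displayed closed form as an object to be verified rather than discovered, and then to check that it reproduces each of the seven special cases already established in Theorems $1$ through $7$. The one identity that drives everything is obtained by multiplying the proposed formula by $3$ and adding $1$:
\begin{equation*}
3a_m+1=4^m(3n+1)=(3n+1)2^{2m}.
\end{equation*}
This is precisely the common generalisation of the relations $3(1)+1=2^2$, $3(3)+1=5(2)$, $3(9)+1=7(2^2)$, and the rest, since each earlier case is just the specialisation obtained by writing $3n+1$ as its odd part times a power of $2$.

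First I would pin down the seed and the recurrence. Setting $m=0$ gives $a_0=\tfrac{1}{3}\big[(3n+1)-1\big]=n$, so the sequence does start at the prescribed odd number. Forming the first difference,
\begin{equation*}
a_m-a_{m-1}=\tfrac{1}{3}(3n+1)\big(4^m-4^{m-1}\big)=(3n+1)2^{2m-2},
\end{equation*}
and factoring $3n+1$ into its odd part times a power of $2$, recovers exactly $a_n-a_{n-1}=2^{2n}$, $b_n-b_{n-1}=5(2^{2n-1})$, up through $f_n-f_{n-1}=7(2^{2n})$. This is the step that earns the word ``generalizes'' in the statement.

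Next I would confirm that every $a_m$ is a genuine odd integer, since the construction feeds the $a_m$ into the odd branch of the Collatz map. Integrality follows from $4\equiv1$ and $3n+1\equiv1\pmod 3$, so that $4^m(3n+1)-1\equiv0\pmod 3$; oddness follows because $n$ odd forces $3n+1$ even, whence $4^m(3n+1)-1$ is odd and therefore so is $a_m$. The Collatz analysis then reads straight off the driving identity: one odd step carries $a_m$ to $(3n+1)2^{2m}$, repeated halving strips the power of $2$ and lands on the odd part of $3n+1$, which already lies on the Collatz trajectory of $n$. Hence $a_m$ reaches $1$ exactly when $n$ does, with the count rising by $2$ at each increment of $m$, reproducing the $N_{\star}+2n+\mathrm{const}$ bookkeeping of the earlier theorems.

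Taken literally the statement is thus almost a verification, and I expect the real obstacle to lie not in any of these computations but in the global induction they are meant to support. To turn ``$a_m$ reaches $1$ when $n$ does'' into an unconditional conclusion one still needs Collatz known for the seed $n$, and to cover all odd numbers one needs every odd $n=2k-1$ (or $4k+1$) to occur as the seed of such a sequence---exactly the existence claim of Theorem $8$. The delicate point will be making that induction well-founded: exhibiting a quantity that strictly decreases when one passes from a sequence to the seed of the next, so that the chain of sequences actually terminates at the base case rather than merely being described as ``formed from the previous one.''
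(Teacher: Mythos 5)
Your proposal is correct and rests on the same identity as the paper's own proof --- the recurrence $a_m=4a_{m-1}+1$ with $a_0=n$, equivalently $3a_m+1=(3n+1)4^m$ --- the only real difference being that you verify the stated closed form (initial value plus first difference) while the paper derives it by iterating the recurrence and summing the geometric pattern $1,5,21,85,\ldots=\tfrac{1}{3}(4^m-1)$. Your added checks of integrality mod $3$ and oddness, and especially your closing remark that the theorem only reduces each $a_m$ to its seed $n$ and therefore cannot by itself make the paper's global induction well-founded, are all correct and identify precisely the point at which the paper's overall argument falls short of a proof of the conjecture.
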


\begin{proof}
As we have discussed above, it is always true that for $a_0=n$, we have the following recurrence relations
\begin{equation*}
a_k-a_{k-1}=3a_{k-1}+1\text{, }a_0=n
\end{equation*}
\begin{equation*}
\Longrightarrow a_k=4a_{k-1}+1\text{, }a_0=n
\end{equation*}

which means 
\begin{equation*}
\text{when }k=1\text{, }a_1=4a_0+1=4n+1=4^1n+1
\end{equation*}

\begin{equation*}
\text{when }k=2\text{, }a_2=4(a_1)+1=4(4n+1)+1=16n+5=4^2n+5
\end{equation*}

\begin{equation*}
\text{when }k=3\text{, }a_3=4(a_2)+1=4(16n+5)+1=64n+21=4^3n+21
\end{equation*}

\begin{equation*}
\text{when }k=4\text{, }a_4=4(a_3)+1=4(64n+21)+1=256n+85=4^4n+85
\end{equation*}

\begin{equation*}
\text{when }k=5\text{, }a_5=4(a_4)+1=4(256n+85)+1=1024n+341=4^5n+341
\end{equation*}
\begin{equation*}
\cdots
\end{equation*}
Therefore we have sequence of numbers for all $n=2k-1\text{, }k\in \mathbb{N}$,
\begin{equation*}
n\text{, }4^1n+1\text{, }4^2n+5\text{, }4^3n+21\text{, }4^4n+85\text{, }4^5n+341\text{, }\cdots
\end{equation*}
But we know that the sequence of numbers $0,1,5,21,85,341, \cdots$ is equal to 
\begin{equation*}
\frac{1}{3}\bigg[ 4^m-1\bigg]\text{, }m\geq 0
\end{equation*}
Hence we obtain the general term of the sequence,
\begin{equation*}
n\text{, }4^1n+1\text{, }4^2n+5\text{, }4^3n+21\text{, }4^4n+85\text{, }4^5n+341\text{, }\cdots
\end{equation*} becomes 
\begin{equation*}
a_m=4^mn+\frac{1}{3}\bigg[ 4^m-1\bigg]\text{, }m\geq 0
\end{equation*} 

\begin{equation*}
\Longrightarrow a_m= \frac{1}{3}\bigg[ (3n+1)4^m-1\bigg]\text{, }m\geq 0
\end{equation*}
\end{proof}
Now we have to discuss on the generalization of the number of Collatz function operation needed for each given odd natural number $n$. Let's get started from the $\bf{Lemma\text{ }1}$.
\begin{lemma}
The number of Collatz function operation needed for $1$ is $3$. That is,
\begin{equation*}
N_{1}=3
\end{equation*}
\end{lemma}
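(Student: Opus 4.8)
The plan is to verify the claim by direct computation, tracing the Collatz trajectory that starts at $k=1$ and counting how many times the function $f$ is applied before the value returns to $1$. Because the statement concerns a single starting value rather than an infinite family, no induction or recurrence analysis is needed here; the whole content of the lemma reduces to evaluating $f$ a handful of times.

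First I would apply the odd branch of the Collatz function to the starting value. Since $1$ is odd, the rule $f(n)=3n+1$ gives $f(1)=3(1)+1=4$. Next, since $4$ is even, the even branch $f(n)=n/2$ gives $f(4)=2$, and one further application gives $f(2)=1$ because $2$ is even. Hence the orbit is
\begin{equation*}
1 \longrightarrow 4 \longrightarrow 2 \longrightarrow 1,
\end{equation*}
which consumes exactly three operations of the Collatz function before the sequence reaches $1$ again.

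Appealing to the Definition of $N_k$ as the number of Collatz function operations needed for $k$ to get to $1$, I would simply count the three arrows above to conclude $N_1=3$. This is consistent with the convention already used implicitly in the earlier theorems: in the discussion following Theorem~1 the base value $a_0=1$ is assigned $3$ steps, recorded there as the ``$2+1$'' count arising from $3(1)+1=2^2$. Thus the lemma merely isolates and names a quantity that the preceding figures have been using all along.

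There is essentially no mathematical obstacle, since this is a terminating, deterministic calculation on a fixed input. The single point that deserves a word of care is the interpretation of ``reaching $1$ starting from $1$'': because the initial value already equals $1$, one must adopt the convention — the same one forced by the step-counts in the preceding figures — that $N_k$ records the number of operations needed for the orbit to leave $1$ and return, rather than setting $N_1=0$. Under that convention the value $N_1=3$ is immediate and unambiguous.
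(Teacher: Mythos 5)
Your proposal is correct and takes essentially the same route as the paper's own proof: both trace the orbit $1 \to 4 \to 2 \to 1$ and count the three applications of $f$ directly. Your added remark about the convention that $N_1$ counts the operations to leave $1$ and return (rather than being $0$) is a reasonable clarification but does not change the argument.
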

\begin{proof}
Since $1$ is odd, then we have to multiply it by $3$ and add $1$, that is, $3(1)+1$ gives us an even number $4$, then divide it by $2$, that is, $4/2$ gives us an even number $2$ again, then divide it by $2$, that is, $2/2$ gives us $1$. Therefore we have used Collatz function $3$ times. Hence $N_{1}=3$.  
\end{proof}

\begin{theorem}
For each $k=1,2,3,\cdots$, the number of Collatz function operation needed for $2k-1$ is $2+N_{(3k-1)}$. That is, we can state the recurrence relation:
\begin{equation*}
N_{(2k-1)}=2+N_{(3k-1)}\text{ ; }N_{1}=3
\end{equation*}
\end{theorem}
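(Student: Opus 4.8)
The plan is to establish the identity by explicitly applying the Collatz function twice to an arbitrary odd number $2k-1$ and then reducing to the already-defined quantity $N_{(3k-1)}$.

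First I would note that $2k-1$ is odd for every $k\in\mathbb{N}$, so the odd branch of $f$ governs the first step:
\begin{equation*}
f(2k-1)=3(2k-1)+1=6k-2=2(3k-1).
\end{equation*}
The crucial point is that $6k-2$ is even, so the even branch of $f$ governs the second step:
\begin{equation*}
f\bigl(2(3k-1)\bigr)=\frac{6k-2}{2}=3k-1.
\end{equation*}
Hence precisely two Collatz operations carry $2k-1$ to $3k-1$, with no freedom in either move.

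Next I would invoke the Definition of $N_k$ directly: the total number of operations taking $2k-1$ down to $1$ splits as the two forced operations above plus the number of operations taking $3k-1$ down to $1$, the latter being $N_{(3k-1)}$ by definition. This gives
\begin{equation*}
N_{(2k-1)}=2+N_{(3k-1)}.
\end{equation*}
For the stated base case I would cite Lemma 1, which supplies $N_1=3$; as a sanity check, putting $k=1$ turns the recurrence into $N_1=2+N_2$, and the single step $2\to1$ gives $N_2=1$, so $N_1=3$ as required.

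I do not anticipate any real obstacle, since the whole argument is the deterministic chain $2k-1\to 2(3k-1)\to 3k-1$. The only things needing care are the two parity checks (that $2k-1$ is odd and that $6k-2$ is even), both immediate, and the remark that these first two steps are uniquely forced, so the step counts add without ambiguity.
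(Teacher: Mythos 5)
Your proposal is correct and rests on exactly the same computation as the paper's proof: the two forced steps $2k-1\mapsto 3(2k-1)+1=2(3k-1)\mapsto 3k-1$, after which the count from $3k-1$ to $1$ is $N_{(3k-1)}$ by definition. The only difference is presentational: the paper wraps this in the language of mathematical induction, but its ``inductive step'' never actually invokes an inductive hypothesis --- it just repeats the same direct two-step calculation for $k=n$ and $k=n+1$ --- so your single direct argument for general $k$ is the honest form of the identical proof.
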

\begin{proof}
We can proof this Theorem using mathematical induction. For $k=1$, we have
\begin{equation*}
N_{(2(1)-1)}=2+N_{(3(1)-1)}
\end{equation*}
\begin{equation*}
\Rightarrow N_{1}=2+N_{2}
\end{equation*}
\begin{equation*}
\Rightarrow 3=2+N_{2}
\end{equation*}
\begin{equation*}
\Rightarrow N_{2}=3-2=1
\end{equation*}
Therefore for $k=1$, the recurrence relation is true.
\justify Now let's prove for $k=n$, that is,
\justify 
Since $2n-1$ is odd, then we have to multiply it by $3$ and add $1$, that is, $3(2n-1)+1$ gives us an even number $2(3n-1)$, then divide it by $2$, that is, $2(3n-1)/2$ gives us a number $3n-1$. Therefore we have used Collatz function $2$ times. Hence $N_{(2n-1)}=2+N{(3n-1)}$. Therefore for $k=n$, the recurrence relation is true.
\justify
 Now let's prove for $k=n+1$, that is,
 \justify 
Since $2(n+1)-1=2n+1$ is odd, then we have to multiply it by $3$ and add $1$, that is, $3(2n+1)+1$ gives us an even number $2(3n+2)$, then divide it by $2$, that is, $2(3n+2)/2$ gives us a number $3n+2$. Therefore we have used Collatz function $2$ times. Hence $N_{(2n+1)}=2+N{(3n+2)}$. Therefore for $k=n+1$, the recurrence relation is true, that is,
\begin{equation*}
N_{(2(n+1)-1)}=2+N_{(3(n+1)-1)}
\end{equation*}
\begin{equation*}
\Rightarrow N_{(2n+1)}=2+N_{(3n+2)}\text{ ; }N_3=7
\end{equation*}
\end{proof}

\begin{theorem}
\label{7.1}
For each $k=1,2,3,\cdots$, the number of Collatz function operation needed for $D_n$ is $2n+N_{D_{0}}$. That is, 
\begin{equation*}
N_{D_{n}}=2n+N_{D_{0}}
\end{equation*}where $D_{0}=2k-1$ and $D_n$ is as defined as in equation {\bf \color{blue}(1)}, that is, for $k=1,2,3,\cdots$
\begin{equation*}
D_n=\frac{1}{3}(2^{2n+1}(3k-1)-1)
\end{equation*}
\end{theorem}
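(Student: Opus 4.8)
The plan is to prove the formula by induction on $n$, using the explicit closed form $D_n = \frac{1}{3}(2^{2n+1}(3k-1)-1)$ to trace the Collatz trajectory of $D_n$ through its first few terms and to splice it onto the already-established trajectory of $D_{n-1}$. The base case $n=0$ is the tautology $N_{D_0} = 2\cdot 0 + N_{D_0}$, so the entire content lies in establishing the one-step recurrence $N_{D_n} = N_{D_{n-1}} + 2$ for $n \geq 1$; telescoping that recurrence then gives the theorem.

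First I would record two elementary consequences of the closed form. Directly from $D_n = \frac{1}{3}(2^{2n+1}(3k-1)-1)$ one obtains
\begin{equation*}
3D_n + 1 = 2^{2n+1}(3k-1),
\end{equation*}
and since the right-hand side is even whereas $3D_n+1$ would be odd if $D_n$ were even, it follows that $D_n$ is odd. Hence the Collatz function applied to $D_n$ performs the tripling step $D_n \mapsto 3D_n+1 = 2^{2n+1}(3k-1)$.

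Next I would trace three Collatz steps starting from $D_n$. The first step produces $2^{2n+1}(3k-1)$; because this carries the factor $2^{2n+1}$ with $2n+1 \geq 3$, the next two steps are forced halvings, yielding $2^{2n}(3k-1)$ and then $2^{2n-1}(3k-1)$. The crucial observation is the junction identity $2^{2n-1}(3k-1) = 3D_{n-1}+1$, which is exactly the value reached after one Collatz step from $D_{n-1}$. Thus after three steps the descent of $D_n$ lands precisely on the second term of the descent of $D_{n-1}$.

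I would finish by counting steps. Since $D_{n-1}$ is odd, its own trajectory begins $D_{n-1} \mapsto 3D_{n-1}+1 \mapsto \cdots \mapsto 1$, so the number of operations carrying $3D_{n-1}+1$ down to $1$ equals $N_{D_{n-1}} - 1$. Combining this with the three steps taken above gives $N_{D_n} = 3 + (N_{D_{n-1}} - 1) = N_{D_{n-1}} + 2$, and telescoping from $n$ down to $0$ yields $N_{D_n} = N_{D_0} + 2n$, as claimed. The only delicate point --- and the hinge of the whole scheme --- is the verification of the junction identity $2^{2n-1}(3k-1) = 3D_{n-1}+1$, which is what aligns the tail of $D_n$'s descent with the established descent of $D_{n-1}$; once that is confirmed, everything else is routine arithmetic and bookkeeping of steps.
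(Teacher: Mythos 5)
Your proposal is correct, and it reaches the result by a genuinely different decomposition than the paper. The paper's proof, despite being framed as induction, is really a direct computation: from $3D_n+1=2^{2n+1}(3k-1)$ it runs the trajectory all the way down to the odd number $3k-1$ in $2n+2$ operations, obtaining $N_{D_n}=2n+2+N_{3k-1}$, and then converts this to $2n+N_{D_0}$ by invoking the separately proved relation $N_{2k-1}=2+N_{3k-1}$ (the paper's preceding theorem); the ``inductive step'' never actually uses an inductive hypothesis. You instead run a true induction on $n$: you stop the descent of $D_n$ after only three operations, at $2^{2n-1}(3k-1)$, and observe the junction identity $2^{2n-1}(3k-1)=3D_{n-1}+1$, which splices the trajectory of $D_n$ onto that of $D_{n-1}$ one step in, giving the recurrence $N_{D_n}=N_{D_{n-1}}+2$ and then the claim by telescoping. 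Your route is self-contained --- it does not need the auxiliary theorem $N_{2k-1}=2+N_{3k-1}$ at all --- while the paper's route yields the extra intermediate formula $N_{D_n}=2n+2+N_{3k-1}$ as a byproduct. Both arguments share the same unavoidable caveat inherited from the statement itself: they assume $N_{D_0}$ (equivalently $N_{3k-1}$) is a well-defined finite quantity, i.e.\ that the trajectory of $2k-1$ actually reaches $1$, which is of course the unresolved content of the conjecture; within that standing assumption your bookkeeping is sound, including the checks that $D_n$ is an odd integer and that the two halvings after the tripling step are forced because $2n+1\geq 3$.
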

\begin{proof}
We can proof this Theorem using mathematical induction. For $n=1$, we have
\begin{equation*}
N_{D_1}=2+N_{D_0}
\end{equation*}
\begin{equation*}
\Rightarrow N_{\left( \frac{1}{3}(2^{3}(3k-1)-1)\right)}=2+N_{2k-1}
\end{equation*}
Since the number $D_1$ is odd, then we have to multiply by $3$ and add $1$, that is, $3D_1+1$ gives us an even number $2^3(3k-1)$, then divide it by $2$ (three times), that is, $2^3(3k-1)/2^3$ gives us a number $3k-1$. Therefore we applied Collatz function $4$ times. Hence $N_{D_1}=4+N_{3k-1}$. Therefore 
\begin{equation*}
N_{D_1}=2+N_{D_0}
\end{equation*}
\begin{equation*}
\Rightarrow 4+N_{(3k-1)}=2+N_{D_0}=2+N_{(2k-1)}
\end{equation*}
\begin{equation*}
\Rightarrow 2+N_{(3k-1)}=N_{(2k-1)}
\end{equation*}

Therefore for $n=1$, the recurrence relation is true.
\justify Now let's prove for $n=m$, that is,
\justify 
Since $D_{m}$ is odd, then we have to multiply it by $3$ and add $1$, that is, $3D_{m}+1$ gives us an even number $2^{2m+1}(3k-1)$, then divide it by two ($2m+1$ times), that is, $2^{2m+1}(3k-1)/2^{2m+1}$ gives us a number $3k-1$. Therefore we have operated Collatz function $2m+2$ times. Hence $N_{D_m}=2m+2+N{(3k-1)}=2m+N_{(2k-1)}=2m+N_{D_0}$. Therefore for $n=m$, the relation is true.
\justify
 Now let's prove for $n=m+1$, that is,
 \justify 
Since $D_{m+1}$ is odd, then we have to multiply it by $3$ and add $1$, that is, $3D_{m+1}+1$ gives us an even number $2^{2(m+1)+1}(3k-1)$, then divide it by two ($2(m+1)+1$ times), that is, $2^{2(m+1)+1}(3k-1)/2^{2(m+1)+1}$ gives us a number $3k-1$. Therefore we have operated Collatz function $2(m+1)+2$ times. Hence $N_{D_{m+1}}=2(m+1)+2+N{(3k-1)}=2m+2+2+N_{(3k-1)}=2m+2+N_{(2k-1)}=2(m+1)+N_{D_0}$. Therefore for $n=m+1$, the relation is true, that is,
\begin{equation*}
N_{D_{n}}=2n+N_{D_0}
\end{equation*}
\begin{equation*}
\Rightarrow N_{D_{m+1}}=2(m+1)+N_{D_0}
\end{equation*}
\end{proof}

\begin{theorem}
If $3n+1=2^{s_1}b_1$, $3(b_1)+1=2^{s_2}b_2$, $3(b_2)+1=2^{s_3}b_3$, $\cdots$, $3(b_{k-1})+1=2^{s_k}$, then for the given odd natural number $n$, it takes $s_1+s_2+\cdots +s_k +k$ steps to reach to $1$ by applying Collatz function
\begin{equation*}
f(n)=\left\{\begin{array}{ll}
n/2 & \text{ if }n \text{ is even,}\\
3n+1& \text{ if }n \text{ is odd.}
\end{array}\right.
\end{equation*}
repeatedly. 
\end{theorem}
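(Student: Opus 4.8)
The plan is to prove this by a direct bookkeeping argument that tracks how many applications of $f$ are consumed in each ``odd-to-odd'' transition, and then to sum these costs (the telescoping can be packaged as a short induction on $k$ if one prefers). The key fact I would isolate first as a counting lemma is: whenever $m$ is an odd number and $3m+1 = 2^{s}b$ with $b$ odd, exactly $s+1$ applications of the Collatz function carry $m$ to $b$. Indeed, since $m$ is odd the first application gives $f(m)=3m+1=2^{s}b$; thereafter each of the next $s$ applications halves the current value, producing
\begin{equation*}
2^{s}b \mapsto 2^{s-1}b \mapsto \cdots \mapsto 2b \mapsto b,
\end{equation*}
and this count is exact because $2^{j}b$ is even for every $j \geq 1$ (forcing the division branch of $f$) while $b$ is odd (so the halving stops precisely at $b$). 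In short, the $2$-adic valuation of $3m+1$ being exactly $s$ is what pins the cost at $s+1$.

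With this lemma in hand, I would set $b_0 := n$ and read the hypotheses --- in which each $b_i$ is understood to be odd, being the odd part left after extracting all factors of $2$ --- as a chain of odd-to-odd transitions
\begin{equation*}
b_0 \to b_1 \to b_2 \to \cdots \to b_{k-1} \to b_k = 1,
\end{equation*}
where the $i$-th arrow is governed by $3b_{i-1}+1 = 2^{s_i}b_i$ and the final relation $3b_{k-1}+1 = 2^{s_k}$ is simply the case $b_k = 1$. By the lemma each arrow $b_{i-1} \to b_i$ costs exactly $s_i + 1$ steps, and the chain terminates precisely at $1$. Summing over all $k$ transitions gives the total number of steps as
\begin{equation*}
\sum_{i=1}^{k}(s_i + 1) = \Big(\sum_{i=1}^{k} s_i\Big) + k = s_1 + s_2 + \cdots + s_k + k,
\end{equation*}
which is the asserted formula.

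For an inductive packaging, the base case $k=1$ (where $3n+1 = 2^{s_1}$ sends $n$ to $1$ in $s_1+1$ steps) is immediate from the lemma, and the inductive step applies the lemma to the first transition $n \to b_1$ and then invokes the induction hypothesis on $b_1$, whose chain has length $k-1$ with valuations $s_2,\dots,s_k$. Either presentation reduces the theorem to the single lemma above together with elementary summation.

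I do not anticipate any serious obstacle, because the statement is an exact accounting identity rather than a claim that depends on resolving the Collatz conjecture: the hypotheses already build in the assumption that the trajectory of $n$ reaches the pure power of $2$ given by $2^{s_k}$ after $k$ odd steps. The only point that genuinely demands care is the exactness of the halving count inside the lemma --- one must check that the division branch of $f$ fires precisely $s_i$ times, neither more nor fewer, and this rests entirely on $b_i$ being odd so that $v_2(3b_{i-1}+1)=s_i$ exactly. Once that is nailed down, the remainder is just a telescoping of the per-transition costs.
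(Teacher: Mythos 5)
Your proposal is correct and follows essentially the same route as the paper's own proof: both count each odd-to-odd transition $b_{i-1}\to b_i$ as costing exactly $s_i+1$ applications of $f$ (one tripling step plus $s_i$ halvings) and then sum $\sum_{i=1}^{k}(s_i+1)=s_1+\cdots+s_k+k$. Your explicit attention to the exactness of the halving count (that $b_i$ odd pins the number of divisions at precisely $s_i$) is a point the paper leaves implicit, but the argument is the same.
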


\begin{proof}
Since $3n+1=2^{s_1}b_1$ is even, then divide $2^{s_1}b_1$ by $2$, $s_1$ times to get $b_1$. So for the odd number $n$ we used $s_1+1$ steps to get an odd number $b_1$ or we operate Collatz function $s_1+1$ times to get $b_1$. Now we have to multiply $b_1$ by $3$ and add $1$ since $b_1$ is an odd number, that is, $3(b_1)+1=2^{s_2}b_2$. Now since $2^{s_2}b_2$ is an even number, therefore we have to operate by Collatz function $s_2$ times to get $b_2$ or divide $2^{s_2}b_2$ by $2$, $s_2$ times. Thus we used $s_2+1$ steps for the odd number $b_1$ to get $b_2$. Similarly we have to operate Collatz function $s_3+1$ times to get $b_3$ starting from $3(b_2)+1$, operate Collatz function $s_4+1$ times to get $b_4$ starting from $3(b_3)+1$, operate Collatz function $s_5+1$ times to get $b_5$ starting from $3(b_4)+1$, $\cdots$, operate Collatz function $s_k+1$ times to get $b_k=1$ starting from $3(b_{k-1})+1$. Hence we used $(s_1+1)+(s_2+1)+(s_3+1)+(s_4+1)+\cdots+(s_k+1)=s_1+s_2+s_3+\cdots+s_k +k$ steps to get $1$ for the given number $n$, or we operated Collatz function $s_1+s_2+s_3+\cdots+s_k +k$ times to get $1$ starting from $3n+1$. Hence
\begin{equation*}
N_n=s_1+s_2+s_3+\cdots+s_k +k
\end{equation*}
\end{proof}    
\sectionn{Examples}
\label{Se:8}
\begin{example}
Find $N_{8k-3}$.  
\end{example}
{\bf Solution.}
Since $N_{D_{n}}=2n+N_{D_{0}}$ this implies $N_{D_{1}}=2+N_{D_{0}}$, where 
\begin{equation*}
D_n=\frac{1}{3}(2^{2n+1}(3k-1)-1)
\end{equation*}
\begin{equation*}
\Rightarrow D_{1}=\frac{1}{3}(2^{3}(3k-1)-1)=2^{3}k-\frac{1}{3}(2^{3}+1)=2^{3}k-3
\end{equation*}Therefore $N_{8k-3}=2+N_{2k-1}$.
\begin{example}
Find $N_{109}$ and $N_{437}$.
\end{example}
{\bf Solution.} From $N_{8k-3}=2+N_{2k-1}$, take $k=14$ and we know that $N_{27}=111$. Therefore $N_{109}=2+N_{27}=113$. Similarly, $N_{437}=2+113=115$.

\sectionn{An alternative proof of Collatz conjecture}
\label{Se:9}
We redefined the Collatz conjecture in the following theorem~(\ref{9.5}) will be used in the algorithm for determining the numbers in the same line or same family of numbers which satisfies the Collatz conjecture. For example, 5,16,8,4,2,1 are in the same family which are obtained by applying Collatz function repeatedly with initial term, 5. Naturally, Collatz conjecture is stated by iteration of the form $y_{(n+1)}=f(y_{(n)})$, $n=1,2,\cdots$, where $f$ is defined in the following theorem~(\ref{9.5}) and $y_{(n+1)}$ is always converges to $1$ as $n\longrightarrow \infty$ for any natural number $y_{0}$ as an initial term. Theorem (\ref{9.5}) will be used to proof Collatz conjecture in theorem (\ref{9.12}).
\begin{theorem}
\label{9.5}
Collatz conjecture can be redefined as
$y_{(n+1)}=f(y_{(n)})$, $n=1,2,\cdots$, $y_{(n+1)}$ is always converges to $1$ as $n\longrightarrow \infty$ for any natural number $y_{0}$ as an initial number. Where, 
\begin{equation}
\label{9.4}
f(n)=\frac{1}{4}\left( 7n+2+(5n+2)(-1)^{n+1}\right)
\end{equation}
\end{theorem}
\begin{proof}
The function, $f(n)$ of Collatz conjecture can always be written as
\begin{equation}
\label{9.2}
f(n)=3n\sin^2\left(\frac{n\pi}{2}\right)+\sin^2\left(\frac{n\pi}{2}\right)+\frac{n}{2}\cos^2\left(\frac{n\pi}{2}\right),
\end{equation}
but using the trigonometric identity,
\begin{equation*}
\sin^2\left(\frac{n\pi}{2}\right)=\frac{1}{2}(1-\cos(n\pi)),\text{ } \cos^2\left(\frac{n\pi}{2}\right)=\frac{1}{2}(1+\cos(n\pi)),
\end{equation*}
then equation~(\ref{9.2}) becomes,
\begin{equation*}
f(n)=\frac{3n}{2}(1-\cos(n\pi))+\frac{1}{2}(1-\cos(n\pi))+\frac{n}{4}(1+\cos(n\pi)),
\end{equation*}
this implies
\begin{equation}
\label{9.3}
f(n)=\frac{1}{4}\left( 7n+2-(5n+2)\cos(n\pi)\right),
\end{equation}
this means

\begin{equation*}
f(n)=\frac{1}{4}\left( 7n+2+(5n+2)(-1)^{n+1}\right), 
\end{equation*}
and we can directly deduce from the statement of the Collatz conjecture that
\begin{equation}
 y_{(n+1)}=f(y_{(n)})\text{, }y_0=m\in\mathbb{Z}^{+},\text{ } n=1,2,\cdots,
\end{equation}
this implies
\begin{equation}
\begin{split}
\label{9.6} 
 y_{(n+1)}=\frac{1}{4}\left( 7y_{(n)}+2+(5y_{(n)}+2)(-1)^{y_{(n)}+1}\right)\text{, }y_0=m\in\mathbb{Z}^{+},\text{ } n=1,2,\cdots,
\end{split}
\end{equation} 
Therefore Collatz conjecture stated as: $y_{(n+1)}$ is always converges to $1$ as $n\longrightarrow \infty$ for any given positive integer $y_{0}=m$.
\end{proof}
In the following theorem (\ref{9.12}) we prove Collatz conjecture using mathematical induction.
\begin{theorem}Collatz conjecture stated as: If we define a function $f(n)$ by
\label{9.12}
\begin{equation*}
f(n)=\left\{\begin{array}{ll}
n/2 & \text{ if }n \text{ is even,}\\
3n+1& \text{ if }n \text{ is odd.}
\end{array}\right.
\end{equation*}
Form a sequence by performing this operation repeatedly, beginning with
any positive integer, $y_0$, then $f(n)$ always converges to 1.
\end{theorem}
\begin{proof}
Let's proceed using mathematical induction. From theorem (\ref{9.5}), Collatz conjecture redefined as
$y_{(n+1)}=f(y_{(n)})$, $n=1,2,\cdots$, $y_{(n+1)}$ is always converges to $1$ as $n\longrightarrow \infty$ for any natural number $y_{0}$ as an initial number. Where, 
\begin{equation*}
f(n)=\frac{1}{4}\left( 7n+2+(5n+2)(-1)^{n+1}\right)
\end{equation*} 
Now, let's suppose that Collatz conjecture is true for some natural number $n=k$, that is, 
\begin{equation}
\label{9.13}
\begin{split} 
 y_{(k+1)}=\frac{1}{4}\left( 7y_{(k)}+2+(5y_{(k)}+2)(-1)^{y_{(k)}+1}\right)\text{, }y_0=m\in\mathbb{Z}^{+},\text{ } k=1,2,\cdots,
\end{split}
\end{equation} 
  $y_{(k+1)}$ is always converges to $1$ as $k\longrightarrow \infty$ for any given positive integer $y_{0}=m$. Now, let us proof this for $n=k+1$ to complete the proof,
\begin{equation*}
\begin{split} 
 y_{(k+2)}=\frac{1}{4}\left( 7y_{(k+1)}+2+(5y_{(k+1)}+2)(-1)^{y_{(k+1)}+1}\right)\text{, }y_0=m\in\mathbb{Z}^{+},\text{ } k=1,2,\cdots,
\end{split}
\end{equation*}but from equation (\ref{9.13}), $y_{(k+1)}$ converges to 1 as $k\longrightarrow \infty$ for any natural number $y_0=m$. Therefore, 
\begin{equation*}
\begin{split} 
 y_{(k+2)}\longrightarrow \frac{1}{4}\left( 7+2+(5+2)(-1)^{1+1}\right)=4\longrightarrow 1.
\end{split}
\end{equation*}
This completes the proof.
\end{proof}
\sectionn{Algorithms}
\label{Se:10}
We can use the following GNU octave code of computer algorithm to execute the same family of numbers which satisfies Collatz conjecture using the iteration defined in  theorem~(\ref{9.5}). Of course, the algorithm can also be used to check the truthfulness of the conjecture for particular integer, y0. 
\begin{algorithm}{(Collatz Conjecture using octave code)}
\label{9.10}
\justify
function []=collatzfunction(y0,n)\\
y0=input(`Please enter your favourite positive integer? ');\\
n=input(`How many number of iterations would you like to have? ');\\
y(1)=y0;\\
for i=2:n\\
    y(i)=(1/4)*(7*y(i-1)+2-(5*y(i-1)+2)*cos(y(i-1)*pi));\\
    endfor\\
    y
\end{algorithm}
The following GNU octave code is also important to determine the number of steps needed to reach to 1 for the particular integers, $Dn$. Theorem~(\ref{7.1}) is used to construct the code. Then go back to Algorithm 1 to check the validity of number of steps needed to reach to 1 for the integers, $D_n$ and the sequences as well.\\
\begin{algorithm}{(To determine number of steps to reach to 1 of integer D(i))}
\label{9.11}
\justify
function [] =numberofsteps(k,ND0,n)\\
  k=input(`k:= ');\\
  ND0=input(`ND0:= ');\\
  n=input(`n:= ');\\
 D0=2*k-1;\\
 D(1)=8*k-3;\\
 N(D(1))=2+ND0;\\
 m=3*k-1;\\
 for i=2:n\\
D(i)=D(i-1)+m*($2^{(2*i-1)}$);\\
N(D(i))=2*i+ND0;\\
  end\\
$[D;N(D)]$
\end{algorithm}

 %{\color{myaqua}

% \vskip 6mm

% \noindent\Large\bf Acknowledgments}

% \vskip 3mm

%{ \fontfamily{cambria}\selectfont
 %\noindent
 %I will thank the editors if they gonna give me some comments and help.

\newpage
\bibliographystyle{model1-num-names}

\end{document}